\newtheorem{theorem}{Theorem}[section]
\newtheorem{corollary}[theorem]{Corollary}
\newtheorem{definition}[theorem]{Definition}
\newtheorem{lemma}[theorem]{Lemma}
\newtheorem{proposition}[theorem]{Proposition}
\newenvironment{proof}[1][Proof]{\noindent\textbf{#1.} }{\ \rule{0.5em}{0.5em}}
\begin{document}

\begin{center}
\ \textbf{\large Solvability of Nonlinear Elliptic Type Equation
With Two Unrelated Non standard Growths}

\bigskip

U\u{g}ur Sert and Kamal Soltanov \footnotetext{%
U. Sert(\Envelope): {}Faculty of Science, Department of Mathematics,
Hacettepe University, 06800, Beytepe, Ankara, Turkey. e-mail:
usert@hacettepe.edu.tr
\par
K. Soltanov: Faculty of Science, Department of Mathematics, Hacettepe
University, 06800, Beytepe, Ankara, Turkey. e-mail: soltanov@hacettepe.edu.tr%
}
\end{center}

\noindent \textbf{Abstract}. In this paper, we study the
solvability of the nonlinear Dirichlet problem with sum of the
operators of independent non standard growths
\begin{equation*}
-div\left(\left\vert\nabla u\right\vert^{p_{1}\left(
x\right)-2}\nabla u\right)-\sum\limits_{i=1}^{n}D_{i}\left(
\left\vert u\right\vert ^{p_{0}\left(
x\right)-2}D_{i}u\right)+c\left( x,u\right) =h\left( x\right)
,\text{ \ \ }x\in \Omega
\end{equation*}%
in a bounded domain $\Omega \subset \mathbb{R}^{n}$. Here, one of
the operators in the sum is monotone and the other is weakly
compact. We obtain sufficient conditions and show the existence of
weak solutions of the considered problem by using monotonicity and
compactness methods together.

\bigskip

\noindent \textbf{Keywords}: Elliptic PDEs, non standard
nonlinearity, variable exponent, solvability theorem, embedding
theorems.

\bigskip

\noindent\textbf{AMS Subject Classification:} 35J60, 35J66.\ \ \ \ \ \ \ \ \
\ \

\section{Introduction}

In this work, we investigate the Dirichlet problem for the nonlinear
elliptic equation with variable nonlinearity
\begin{equation*}
\left\{
\begin{array}{l}
-div\left(\left\vert\nabla u\right\vert^{p_{1}\left(
x\right)-2}\nabla u\right)-\sum\limits_{i=1}^{n}D_{i}\left(
\left\vert u\right\vert ^{p_{0}\left(
x\right)-2}D_{i}u\right)+c\left( x,u\right) =h\left( x\right) \\
u\mid _{\partial \Omega }=0%
\end{array}%
\right.  \tag{1.1}
\end{equation*}%
where $x\in\Omega \subset \mathbb{R}^{n}\left( n\geq 3\right) $ is
a bounded domain which has sufficiently smooth boundary (at least
Lipschitz boundary), $D_{i}\equiv\partial/\partial x_{i}$,
$p_{0},$ $p_{1\text{ }}$are nonnegative measurable functions
defined on $\Omega ,$ $h$ is a generalized function and $c:\Omega
\times
\mathbb{R}
\rightarrow
\mathbb{R}
$, $c\left( x,\tau \right) $ is a function with variable nonlinearity in $%
\tau $ (for example, $%
c\left( x,u\right) =c_{0}\left( x\right) \left\vert u\right\vert ^{\alpha
\left( x\right) -2}u+c_{1}\left( x\right) $, see Section 2).

We denote the operators $A$ and$\ B$ with \newline $A\left(
u\right) :=-div\left(\left\vert\nabla u\right\vert^{p_{1}\left(
x\right)-2}\nabla u\right),$ $B\left( u\right)
:=-\sum\limits_{i=1}^{n}D_{i}\left( \left\vert u\right\vert
^{p_{0}\left( x\right) -2}D_{i}u\right) +c\left( x,u\right). $

There has been recently a considerable interest in the study of
equations and variational problems with variable exponents of
nonlinearities due to their applications. Nonlinear equations
including the operator $A\left( u\right) $ is a rather common
nonlinear problem with variable exponent which is known as
$p_{1}\left( .\right) $-Laplacian equation. This kind of problems
have been studied in various contexts by many authors [1, 3, 4, 6,
27] and have wide range of application areas in the mathematical
modeling of non-Newtonian fluids [7, 18, 19], theory of elasticity
and hydrodynamics [28], thermistor problem [26] and in image
restoration [9] etc. (Indeed, all application areas mentioned
above are also valid for problem (1.1)). On the other hand, the
equations of the type $B\left( u\right) =h$ are rarely researched.
For instance in [5], a similar type of problem for $B\left(
u\right) =h$ was studied by Antontsev and Shmarev who investigated
the regularized problem to show the existence of weak solution.
Such equations may appear in the mathematical modeling of the
process of nonstable filtration of an ideal barotropic gas in a
nonhomogeneous porous medium (for sample see [3]). We also refer
[14, 17, 19] for the several of the most important applications of
(1.1) and nonlinear partial differential equations with variable
exponent arise from mathematical modeling of suitable processes in
mechanics, mathematical physics, image processing etc.

To the best of our knowledge, by now there has not been any
studies on the existence of solutions for the elliptic equations
of the type (1.1) with variable exponents of nonlinearity.
However, we note that a similar problem to (1.1) with constant
exponents was investigated in [24]. More exactly, in [24] the
question of the solvability of an operator equation when the case
the operator is in the form of sum of a weakly compact and
pseudo-monotone operators was answered. In the present paper, we
study the similar type of operator equation in a model problem
when the operators in the sum with variable nonlinearity and
obtain the sufficient conditions for solvability. The our goal of
studying the model problem is to provide a more understandable and
explicit way for the established results in this article.

The main feature of the equation $A\left( u\right) +B\left(
u\right) =h$ is that, the exponents $p_{0}\left( x\right) $ and
$p_{1}\left( x\right) $ are independent of each other. Thus,
neither $A$ nor $B $ is the main part of this equation. It is need
to note that if $A$ is the main part of the equation, i.e the
exponents are dependent each other, the results for the theory of
pseudo-monotone operators can be used to investigate the problem.
However, in the case that we consider, any methods which is merely
related to monotonicity can not be used.

We use the basic general solvability theorem [24], (Theorem 2.5) to prove
the existence of weak solution of the problem (1.1). In order to apply this
theorem to existence theorem (Theorem 2.4) for problem (1.1), we obtain
sufficient conditions and prove the monotonicity of the operator $A$ and
weak compactness of $B$ on proper spaces under these conditions, and then we
get the solvability of posed problem by simultaneously using monotonicity
and compactness.

This paper is organized as follows: In the next section, we
present the assumptions, definition of the weak solution and
description of the main result. For this purpose, we also define
some function classes which are required to study the posed
problem. In Section 3, firstly, we establish some integral
inequalities to investigate the function classes (pseudo-norm
spaces) defined in previous section and afterwards verify some
necessary lemmas and theorems which indicate the relation of these
spaces with the Lebesgue and Sobolev spaces with variable exponent
and the continuous and compact embeddings of these function spaces
etc. In Section 4, we give the proof of the main theorem (Theorem
2.4) of this paper by the help of the embedding results obtained
in Section 3.

\section{Statement of The Problem and The Main Result}

Let $\Omega \subset \mathbb{R} ^{n}\left( n\geq 3\right) $ be a bounded
domain with sufficiently smooth boundary $\partial\Omega.$ We study the
problem (1.1)
\begin{equation*}
\left\{
\begin{array}{l}
-div\left(\left\vert\nabla u\right\vert^{p_{1}\left(
x\right)-2}\nabla u\right)-\sum\limits_{i=1}^{n}D_{i}\left(
\left\vert u\right\vert ^{p_{0}\left(
x\right)-2}D_{i}u\right)+c\left( x,u\right) =h\left( x\right),\hspace{0.3cm}x\in \Omega \\
u\mid _{\partial \Omega }=0%
\end{array}%
\right.
\end{equation*}
under the following conditions:

\begin{enumerate}
\item[\textit{\textbf{(U1)}}] $2\leq p_{0}^{-}\leq p_{0}\left( x\right) \leq
p_{0}^{+}<\infty ,$ $1< p_{1}^{-}\leq p_{1}\left( x\right) \leq
p_{1}^{+}<\infty $ \textit{and} $p_{0}\in C^{1}\left(
\bar{\Omega}\right) ,$ $p_{1}\in C^{0}\left( \bar{\Omega}\right).$

\item[\textit{\textbf{(U2)}}] \textit{There exists a measurable function} $%
\alpha :\Omega \longrightarrow \left[ 1,\infty \right) ,$ $1\leq \alpha
^{-}\leq \alpha \left( x\right) \leq \alpha ^{+}<\infty $ \textit{such that
the following inequalities hold}
\begin{equation*}
\left\vert c\left( x,\tau \right) \right\vert \leq c_{0}\left( x\right)
\left\vert \tau \right\vert ^{\alpha \left( x\right) -1}+c_{1}\left( x\right)
\end{equation*}
and
\begin{equation*}
c\left( x,\tau \right) \tau \geq c_{2}\left( x\right) \left\vert \tau
\right\vert ^{\alpha \left( x\right) },
\end{equation*}
a.e. $\left( x,\tau \right) \in \Omega \times
\mathbb{R}
.$

\textit{Here }$c\left( x,\tau \right) $ \textit{is a Caratheodory function and} $%
c_{i} $, $i=0,1,2$ \textit{are nonnegative measurable functions defined on} $%
\Omega $, \textit{besides for} $\varepsilon >0,$ $\alpha \left( x\right)
\geq p_{0}\left( x\right) +\varepsilon $ \textit{and} $c_{2}\left( x\right)
\geq \tilde{C}>0$, $x\in \Omega $ \textit{is satisfied.}
\end{enumerate}

In order to give the definition of weak solution of the problem
(1.1), we introduce the required spaces. For this, first we remind
some basic facts about generalized Lebesgue and Sobolev spaces [2,
8, 10, 12, 15].

Let $\Omega $ be a Lebesgue measurable subset of $%
\mathbb{R}
^{n}$ such that $\left\vert \Omega \right\vert >0$ (Throughout the paper, we
denote by $\left\vert \Omega \right\vert $ the Lebesgue measure of $\Omega $%
). By $M\left( \Omega \right) $ denote the family of all
measurable
functions $p:\Omega \longrightarrow \left[ 1,\infty \right] $ and by $%
M_{0}\left( \Omega \right) $,
\begin{equation*}
M_{0}\left( \Omega \right) :=\left\{ p\in M\left( \Omega \right)
:\ 1\leq p^{-}\leq p\left( x\right) \leq p^{+}<\infty ,\text{ a.e.
}x\in \Omega \right\} .
\end{equation*}
where $p^{-}:=\underset{\Omega }{%
ess}\inf \left\vert p\left( x\right) \right\vert,\text{ } p^{+}:=%
\underset{\Omega }{ess}\sup \left\vert p\left( x\right)
\right\vert$. \par For $p\in $ $M\left( \Omega \right) ,$ $\Omega
_{\infty }^{p}\equiv \Omega _{\infty }\equiv \left\{ x\in \Omega
|\text{ }p\left( x\right) =\infty \right\} $ then on the set of
all functions on $\Omega $ define the
functional $\sigma _{p}$ and $\left\Vert .\right\Vert _{p}$ by%
\begin{equation*}
\sigma _{p}\left( u\right) \equiv \int\limits_{\Omega \backslash \Omega
_{\infty }}\left\vert u\right\vert ^{p\left( x\right) }dx+\underset{\Omega
_{\infty }}{ess}\sup \left\vert u\left( x\right) \right\vert
\end{equation*}%
and%
\begin{equation*}
\left\Vert u\right\Vert _{L^{p\left( x\right) }\left( \Omega \right) }\equiv
\inf \left\{ \lambda >0:\text{ }\sigma _{p}\left( \frac{u}{\lambda }\right)
\leq 1\right\} .
\end{equation*}%
Clearly if $p\in L^{\infty }\left( \Omega \right) $ then $p\in
M_{0}\left( \Omega \right) $, $\sigma _{p}\left( u\right) \equiv
\int\limits_{\Omega }\left\vert u\right\vert ^{p\left( x\right)
}dx$ and the generalized
Lebesgue space is defined as follows:%
\begin{equation*}
L^{p\left( x\right) }\left( \Omega \right) := \left\{ u: u\text{ }
\text{is a measurable real-valued function such that}\text{ }
\sigma _{p}\left( u\right) <\infty \right\} .
\end{equation*}%
If $p^{-}>1,$ then the space $L^{p\left( x\right) }\left( \Omega
\right) $ becomes a reflexive and separable Banach space under the
norm $\left\Vert .\right\Vert _{L^{p\left( x\right) }\left( \Omega
\right) }$which is so-called Luxemburg norm. \par If $0<\left\vert
\Omega \right\vert <\infty ,$ and
$p_{1},$ $p_{2}\in M\left( \Omega \right) $ then the continuous embedding  $%
L^{p_{1}\left( x\right) }\left( \Omega \right) \subset L^{p_{2}\left(
x\right) }\left( \Omega \right) $ exists$\iff $ $p_{2}\left( x\right) \leq
p_{1}\left( x\right) $ for a.e $x\in \Omega .$

For $u\in L^{p\left( x\right) }\left( \Omega \right) $ and $v\in
L^{q\left( x\right) }\left( \Omega \right) $ where $p,$ $q\in $
$M_{0}\left( \Omega \right) $ and $\frac{1}{p\left( x\right)
}+\frac{1}{q\left( x\right) }=1$ the following inequalities holds

\begin{equation*}
\int\limits_{\Omega }\left\vert uv\right\vert dx\leq 2\left\Vert
u\right\Vert _{L^{p\left( x\right) }\left( \Omega \right) }\left\Vert
v\right\Vert _{L^{q\left( x\right) }\left( \Omega \right), } \text{
(generalized H\"{o}lder inequality)}  \tag{2.1}
\end{equation*}
and
\begin{equation*}
\min \{\left\Vert u\right\Vert _{L^{p\left( x\right) }\left( \Omega \right)
}^{p^{-}}, \left\Vert u\right\Vert _{L^{p\left( x\right) }\left( \Omega
\right) }^{p^{+}}\}\leq \sigma _{p}\left( u\right) \leq \max \{\left\Vert
u\right\Vert _{L^{p\left( x\right) }\left( \Omega \right) }^{p^{-}},
\left\Vert u\right\Vert _{L^{p\left( x\right) }\left( \Omega \right)
}^{p^{+}}\}  \tag{2.2}
\end{equation*}

Let $\Omega \subset
\mathbb{R}
^{n}$ be a bounded domain and $p\in L^{\infty }\left( \Omega \right) $ then
generalized Sobolev space is defined as follows:%
\begin{equation*}
W^{1,\text{ }p\left( x\right) }\left( \Omega \right) :=\left\{ u\in
L^{p\left( x\right) }\left( \Omega \right) :\text{ }\left\vert \nabla
u\right\vert \in L^{p\left( x\right) }\left( \Omega \right) \right\}
\end{equation*}%
and this space is a separable Banach space under the norm:%
\begin{equation*}
\left\Vert u\right\Vert _{W^{1,\text{ }p\left( x\right) }\left( \Omega
\right) }\equiv \left\Vert u\right\Vert _{L^{p\left( x\right) }\left( \Omega
\right) }+\left\Vert \nabla u\right\Vert _{L^{p\left( x\right) }\left(
\Omega \right) }
\end{equation*}%
$W_{0}^{1,\text{ }p\left( x\right) }\left( \Omega \right) $ defines as the
closure of $C_{0}^{\infty }\left( \Omega \right) $ in $W^{1,\text{ }p\left(
x\right) }\left( \Omega \right) .$ If $p^{-}>1$ then $W^{1,\text{ }p\left(
x\right) }\left( \Omega \right) $ and $W_{0}^{1,\text{ }p\left( x\right)
}\left( \Omega \right) $ are reflexive and separable Banach spaces. If $%
\partial \Omega $ is Lipschitz boundary$\ $and $p\in C^{0}\left( \bar{\Omega}%
\right) ,$ then equivalent norm in $W_{0}^{1,\text{ }p\left( x\right)
}\left( \Omega \right) $ is given by;%
\begin{equation*}
\left\Vert u\right\Vert _{W_{0}^{1,\text{ }p\left( x\right)
}\left( \Omega \right) }\equiv \left\Vert \nabla u\right\Vert
_{L^{p\left( x\right) }\left( \Omega \right)}\equiv
\sum_{i=1}^{n}\left\Vert D_{i}u\right\Vert_{L^{p\left( x\right)
}\left( \Omega \right)}.
\end{equation*}%
Let $p$, $q\in C\left( \bar{\Omega}\right) \cap M_{0}\left( \Omega
\right) $
and $p\left( x\right) <n,$ $q\left( x\right) <\frac{np\left( x\right) }{%
n-p\left( x\right) }\equiv p^{\ast }\left( x\right) $ is hold for all $x\in
\Omega ,$ then there is a continuous and compact embedding $W^{1,\text{ }%
p\left( x\right) }\left( \Omega \right) \hookrightarrow L^{q\left( x\right)
}\left( \Omega \right) .$

A function $p\in M_{0}\left( \Omega \right) $ is called
log-H\"{o}lder continuous if there is a constant $L$ such that the
inequality
\begin{equation*}
-\left\vert p\left( x\right) -p\left( y\right) \right\vert \log
\left\vert x-y\right\vert \leq L,\text{ \ \ \ \ }\forall
x,\text{}y\in \Omega
\end{equation*}%
\ holds. If $p$ is log-H\"{o}lder continuous and $q\in M_{0}\left(
\Omega \right) $ then we have the continuous embedding
$W^{1,\text{ }p\left( x\right) }\left( \Omega \right) \subset
L^{q\left( x\right) }\left( \Omega \right) $ for all $q\leq
p^{\ast }.$ \newline For more details and embedding results for
these spaces see [2, 8, 10-12, 15]. \vspace{0.2cm}

We now define some function classes which are required to study the problem
(1.1). These classes are nonlinear spaces which are the generalization of
the nonlinear spaces with constant exponent studied in [21-24](see also
references of them). We also note that the necessary properties of these
spaces are presented in Section 3.

\begin{definition}
Let $\Omega \subset
\mathbb{R}
^{n}\left( n\geq 2\right) $ be a bounded domain with Lipschitz
boundary and $\gamma ,$ $\beta $ $\in P_{0}\left( \Omega \right)
.$ We introduce $S_{1,\gamma \left( x\right) ,\beta \left(
x\right) }\left( \Omega \right) ,$ the class of functions
$u:\Omega \rightarrow
\mathbb{R}
$ and the functional $[.]_{S_{\gamma ,\beta }}:S_{1,\gamma \left( x\right)
,\beta \left( x\right) }\left( \Omega \right) \longrightarrow
\mathbb{R}
_{+}$ as follows:%
\begin{equation*}
S_{1,\gamma \left( x\right) ,\beta \left( x\right) }\left( \Omega \right)
:=\left\{ u\in L^{1}\left( \Omega \right) :\int\limits_{\Omega }\left\vert
u\right\vert ^{\gamma \left( x\right) +\beta \left( x\right)
}dx+\sum_{i=1}^{n}\int\limits_{\Omega }\left\vert u\right\vert ^{\gamma
\left( x\right) }\left\vert D_{i}u\right\vert ^{\beta \left( x\right)
}dx<\infty \right\} ,
\end{equation*}%
\begin{equation*}
\lbrack u]_{S_{\gamma ,\beta }}:=\inf \left\{ \lambda >0:\int\limits_{\Omega
}\left\vert \frac{u}{\lambda }\right\vert ^{\gamma \left( x\right) +\beta
\left( x\right) }dx+\sum_{i=1}^{n}\left( \int\limits_{\Omega }\left\vert
\frac{\left\vert u\right\vert ^{\frac{\gamma \left( x\right) }{\beta \left(
x\right) }}D_{i}u}{\lambda ^{\frac{\gamma \left( x\right) }{\beta \left(
x\right) }+1}}\right\vert ^{\beta \left( x\right) }\right) dx\leq 1\right\} .
\end{equation*}
\end{definition}

$[.]_{S_{\gamma ,\beta }}$ defines a pseudo-norm on $S_{1,\gamma
\left( x\right) ,\beta \left( x\right) }\left( \Omega \right) ,$
actually it can be clearly seen that $[.]_{S_{\gamma ,\beta }}$
fulfills all conditions of
pseudo-norm (pn) see [21] i.e. $[u]_{S_{\gamma ,\beta }}\geq 0,$ $%
u=0\Rightarrow \lbrack u]_{S_{\gamma ,\beta }}=0,$ $[u]_{S_{\gamma ,\beta
}}\neq \lbrack v]_{S_{\gamma ,\beta }}\Rightarrow u\neq v$ and $%
[u]_{S_{\gamma ,\beta }}=0\Rightarrow u=0.$

\medskip

Let $S_{1,\gamma \left( x\right) ,\beta \left( x\right) }\left(
\Omega \right) $ be the space given in the Definition 2.1 and
$\theta \left( x\right)\in M_{0}\left( \Omega \right)$, we denote
$S_{1,\gamma \left( x\right) ,\beta \left( x\right) ,\theta \left(
x\right) }\left( \Omega \right) ,$ the class of functions
$u:\Omega \rightarrow \mathbb{R} $ by the following intersection
\begin{equation*}
S_{1,\gamma \left( x\right) ,\beta \left( x\right) ,\theta \left( x\right)
}\left( \Omega \right) :=S_{1,\gamma \left( x\right) ,\beta \left( x\right)
}\left( \Omega \right) \cap L^{\theta \left( x\right) }\left( \Omega \right)
,  \tag{2.3}
\end{equation*}%
with the pseudo-norm
\begin{equation*}
\lbrack u]_{S_{\gamma ,\beta ,\theta} }:=[u]_{S_{\gamma ,\beta }}+\left\Vert
u\right\Vert _{L^{\theta \left( x\right) }\left( \Omega \right) },\text{ \ }%
\forall u\in S_{1,\gamma \left( x\right) ,\beta \left( x\right) ,\theta
\left( x\right) }\left( \Omega \right) .
\end{equation*}

We now state a proposition which can be easily proved by the embedding
results for the Lebesgue spaces with variable exponent.

\begin{proposition}
If $\gamma $, $\beta $, $\theta $ $\in M_{0}\left( \Omega \right) $ and $%
\theta \left( x\right) \geq \gamma \left( x\right) +\beta \left( x\right)
+\varepsilon _{0}$ a.e. $x\in \Omega $ for some $\varepsilon _{0}>0,$ then
we have the following equivalence;
\begin{equation*}
S_{1,\gamma \left( x\right) ,\beta \left( x\right) ,\theta \left( x\right)
}\left( \Omega \right) \equiv \left\{ u\in L^{1}\left( \Omega \right) :\Re
^{\gamma ,\beta ,\theta }\left( u\right) :=\int\limits_{\Omega }\left\vert
u\right\vert ^{\theta \left( x\right) }dx+\sum_{i=1}^{n}\int\limits_{\Omega
}\left\vert u\right\vert ^{\gamma \left( x\right) }\left\vert
D_{i}u\right\vert ^{\beta \left( x\right) }dx<\infty \right\} ,
\end{equation*}%
with the pseudo-norm
\begin{equation*}
\lbrack u]_{S_{\gamma ,\beta ,\theta }}\equiv \inf \left\{ \lambda
>0:\int\limits_{\Omega }\left\vert \frac{u}{\lambda }\right\vert ^{\theta
\left( x\right) }dx+\sum_{i=1}^{n}\left( \int\limits_{\Omega }\left\vert
\frac{\left\vert u\right\vert ^{\frac{\gamma \left( x\right) }{\beta \left(
x\right) }}D_{i}u}{\lambda ^{\frac{\gamma \left( x\right) }{\beta \left(
x\right) }+1}}\right\vert ^{\beta \left( x\right) }\right) dx\leq 1\right\} .
\end{equation*}
\end{proposition}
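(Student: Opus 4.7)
The plan is to split the equivalence into two parts: (i) equality of the underlying sets, and (ii) equivalence of the pseudo-norms. The central input throughout is the continuous embedding $L^{\theta(x)}(\Omega) \subset L^{\gamma(x)+\beta(x)}(\Omega)$ for bounded $\Omega$, which holds because the hypothesis $\theta(x) \geq \gamma(x)+\beta(x)+\varepsilon_{0}$ implies in particular $\gamma(x)+\beta(x) \leq \theta(x)$ a.e.\ on $\Omega$ (this embedding is recorded in the preliminaries). A second routine ingredient is the two-sided bound (2.2) between the Luxemburg norm and the modular $\sigma_{p}$, which translates ``norm $\leq 1$'' back and forth to ``modular $\leq 1$'' on each exponent.

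For the set equality, I would argue as follows. If $u\in S_{1,\gamma,\beta,\theta}(\Omega)$, then by definition $u\in L^{\theta(x)}(\Omega)$, which means $\int_{\Omega}|u|^{\theta(x)}\,dx<\infty$; and $\sum_i\int_\Omega |u|^{\gamma(x)}|D_i u|^{\beta(x)}\,dx<\infty$ is already built into the definition of $S_{1,\gamma,\beta}(\Omega)$. This gives $\Re^{\gamma,\beta,\theta}(u)<\infty$. Conversely, if $\Re^{\gamma,\beta,\theta}(u)<\infty$, then $u\in L^{\theta(x)}(\Omega)$, and the continuous embedding into $L^{\gamma(x)+\beta(x)}(\Omega)$ forces $\int_\Omega |u|^{\gamma(x)+\beta(x)}\,dx<\infty$; together with the weighted-gradient term this shows $u\in S_{1,\gamma,\beta}(\Omega)\cap L^{\theta(x)}(\Omega)$.

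For the pseudo-norm equivalence, write $[u]_B$ for the infimum on the right. For the easy direction $[u]_B \leq C\,[u]_{S_{\gamma,\beta,\theta}}$, let $\mu=[u]_{S_{\gamma,\beta}}$, $\nu=\|u\|_{L^{\theta(x)}}$. For $\lambda$ sufficiently larger than $\mu+\nu$, I would use (2.2) (with $\theta^{-}\geq 1$ and $(\gamma+\beta)^{-}\geq 1$) to make $\sigma_\theta(u/\lambda)\leq 1/2$, while simultaneously the gradient modular $\sum_i\int_\Omega |u|^{\gamma(x)}|D_iu|^{\beta(x)}\lambda^{-(\gamma(x)+\beta(x))}\,dx$ drops below $1/2$ (this modular is monotonically decreasing in $\lambda$ for $\lambda\geq 1$, and scales down at least like $\lambda^{-(\gamma+\beta)^{-}}$ of its value at $\mu$). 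Summing gives $[u]_B\leq\lambda$, whence $[u]_B\leq C([u]_{S_{\gamma,\beta}}+\|u\|_{L^{\theta(x)}})$.

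For the reverse direction $[u]_{S_{\gamma,\beta,\theta}} \leq C'\,[u]_B$, any $\lambda>[u]_B$ satisfies $\sigma_\theta(u/\lambda)\leq 1$, which by (2.2) yields $\|u\|_{L^{\theta(x)}}\leq \lambda$, so $\|u\|_{L^{\theta(x)}}\leq [u]_B$. The embedding then gives $\|u\|_{L^{\gamma+\beta}(\Omega)}\leq C\|u\|_{L^{\theta(x)}}$, and rescaling $\lambda\mapsto K\lambda$ with $K$ depending only on $C,(\gamma+\beta)^{\pm}$ forces both $\int_\Omega|u/(K\lambda)|^{\gamma(x)+\beta(x)}\,dx\leq 1/2$ and the gradient sum $\leq 1/2$; therefore $[u]_{S_{\gamma,\beta}}\leq K[u]_B$. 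Combining, $[u]_{S_{\gamma,\beta,\theta}}\leq (K+1)[u]_B$. The main obstacle, and essentially the only delicate step, is this last bookkeeping: a single scaling parameter $\lambda$ must simultaneously drive two modulars of different variable-exponent types below $1/2$, and the constants must be extracted cleanly from (2.2) together with the uniform bounds $\theta^{\pm},\gamma^{\pm},\beta^{\pm}$; once those universal constants are in place, the rest is mechanical.
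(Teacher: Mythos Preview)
Your proposal is correct and follows exactly the route the paper indicates: the paper does not give a proof of this proposition but merely remarks that it ``can be easily proved by the embedding results for the Lebesgue spaces with variable exponent,'' and that embedding $L^{\theta(x)}(\Omega)\subset L^{\gamma(x)+\beta(x)}(\Omega)$ is precisely the tool you use for both the set equality and the pseudo-norm comparison. Your write-up simply supplies the bookkeeping the authors omitted.
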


\medskip\noindent Also denote the dual spaces, $W^{-1,\text{ }q_{0}\left( x\right)
}\left( \Omega \right) :=\left( W_{0}^{1,\text{ }p_{0}\left( x\right)
}\left( \Omega \right) \right) ^{\ast },$ $W^{-1,\text{ }q_{1}\left(
x\right) }\left( \Omega \right) :=\left( W_{0}^{1,\text{ }p_{1}\left(
x\right) }\left( \Omega \right) \right) ^{\ast }$ and $L^{\alpha ^{\prime
}\left( x\right) }\left( \Omega \right) :=\left( L^{\alpha \left( x\right)
}\left( \Omega \right) \right) ^{\ast },$ here $q_{0}\left( x\right) :=\frac{%
p_{0}\left( x\right) }{p_{0}\left( x\right) -1}, $ $q_{1}\left( x\right) :=%
\frac{p_{1}\left( x\right) }{p_{1}\left( x\right) -1}$ and $\alpha ^{\prime
}\left( x\right) :=\frac{\alpha \left( x\right) }{\alpha \left( x\right) -1}%
. $

We investigate the problem (1.1) for functions $h\in W^{-1,\text{ }%
q_{0}\left( x\right) }\left( \Omega \right) +L^{\alpha ^{\prime }\left(
x\right) }\left( \Omega \right) +W^{-1,\text{ }q_{1}\left( x\right) }\left(
\Omega \right) .$ Let us denote $Q\left( \Omega \right)$ by
\begin{equation*}
Q\left( \Omega \right) :=\mathring{S}_{1,q_{0}\left( x\right) \left(
p_{0}\left( x\right) -2\right) ,q_{0}\left( x\right) ,\alpha \left( x\right)
}\left( \Omega \right) \cap W_{0}^{1,\text{ }p_{1}\left( x\right) }\left(
\Omega \right)
\end{equation*}%
where $\mathring{S}_{1,q_{0}\left( x\right) \left( p_{0}\left(
x\right) -2\right) ,q_{0}\left( x\right) ,\alpha \left( x\right)
}\left( \Omega \right):=\left\{ u\in S_{1,q_{0}\left( x\right)
\left( p_{0}\left( x\right) -2\right) ,q_{0}\left( x\right)
,\alpha \left( x\right) }\left( \Omega \right) : u\mid _{\partial
\Omega }=0\right\} .$ Noticed that the condition on $\alpha \left(
x\right)$ in \emph{(U2)} indicates that that Proposition 2.2 is
valid for $\mathring{S}_{1,q_{0}\left( x\right) \left( p_{0}\left(
x\right) -2\right) ,q_{0}\left( x\right) ,\alpha \left( x\right)
}\left( \Omega \right)$.

\medskip\noindent We are ready to give the definition of the weak
solution of problem (1.1). \medskip

\begin{definition}
If the function $u\in Q\left( \Omega \right) $ satisfies the following
equality%
\begin{align}
&\int\limits_{\Omega }\left(\left\vert\nabla
u\right\vert^{p_{1}\left( x\right)-2}\nabla u\right)\cdot\nabla
vdx+\sum_{i=1}^{n}\int\limits_{\Omega }\left( \left\vert
u\right\vert ^{p_{0}\left( x\right) -2}D_{i}u\right)
D_{i}vdx+\notag{}\\
&+\int\limits_{\Omega }c\left( x,u\right) vdx=\int\limits_{\Omega
}hvdx  \tag{2.4}
\end{align}%
for every $v\in W_{0}^{1,\text{ }p_{0}\left( x\right) }\left( \Omega \right)
\cap W_{0}^{1,\text{ }p_{1}\left( x\right) }\left( \Omega \right) \cap
L^{\alpha \left( x\right) }\left( \Omega \right) ,$ then the function $u$ is
called the weak solution of the problem (1.1).
\end{definition}

\noindent Note that it is clear under the conditions \emph{(U1)}
and \emph{(U2)}, all the integrals in (2.4) make sense.

\noindent Now we state the main theorem of this article that is
the solvability theorem for problem (1.1): \medskip

\begin{theorem}
\textbf{(Existence Theorem)} Let the conditions
\emph{(U1)}-\emph{(U2)}
fulfill and $c_{0}, $ $c_{2}\in L^{\infty }\left( \Omega \right) ,$ $%
c_{1}\in $ $L^{\alpha ^{\prime }\left( x\right) }\left( \Omega \right) .$
Then for every $h\in $ $W^{-1,\text{ }q_{0}\left( x\right) }\left( \Omega
\right) +L^{\alpha ^{\prime }\left( x\right) }\left( \Omega \right) +W^{-1,%
\text{ }q_{1}\left( x\right) }\left( \Omega \right) ,$ problem (1.1) has a
weak solution in the space $Q\left( \Omega \right) .$
\end{theorem}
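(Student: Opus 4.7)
The plan is to recast the weak formulation (2.4) as the operator equation $A(u)+B(u)=h$ on the space $Q(\Omega)$ and then invoke the general abstract solvability theorem of \cite{}, namely Theorem 2.5 of \cite{}, with $A$ playing the role of the (pseudo-)monotone part and $B$ the role of the weakly compact part. Concretely, I would first identify the duality pairings: $A$ acts as an operator from $W_{0}^{1,p_{1}(x)}(\Omega)$ into $W^{-1,q_{1}(x)}(\Omega)$ in the standard way, while $B$ is defined on $\mathring{S}_{1,q_{0}(x)(p_{0}(x)-2),q_{0}(x),\alpha(x)}(\Omega)$ and sends it (by the equivalent description in Proposition 2.2) into $W^{-1,q_{0}(x)}(\Omega)+L^{\alpha^{\prime}(x)}(\Omega)$. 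Because $Q(\Omega)$ is the intersection of these two spaces, the sum $A+B$ is a well-defined operator from $Q(\Omega)$ into the sum of the respective dual spaces, which is exactly the space to which $h$ belongs.

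Next I would verify the structural properties of the two operators separately. For $A$, the standard inequalities for the $p_{1}(x)$-Laplacian (together with (2.2)) give monotonicity, boundedness, hemicontinuity, and the coercivity estimate
\begin{equation*}
\langle A(u),u\rangle=\int_{\Omega}|\nabla u|^{p_{1}(x)}\,dx\geq \min\bigl\{\|\nabla u\|_{L^{p_{1}(x)}}^{p_{1}^{-}},\,\|\nabla u\|_{L^{p_{1}(x)}}^{p_{1}^{+}}\bigr\}.
\end{equation*}
For $B$, the growth hypothesis (U2) and the structure of $|u|^{p_{0}(x)-2}D_{i}u$ yield the a priori bound
\begin{equation*}
\langle B(u),u\rangle\geq \tfrac{1}{(p_{0}^{+}-1)^{p_{0}^{+}}}\sum_{i=1}^{n}\int_{\Omega}|D_{i}(|u|^{\frac{p_{0}(x)-2}{2}}u)|^{2}\text{-type terms}+\tilde{C}\int_{\Omega}|u|^{\alpha(x)}\,dx,
\end{equation*}
(i.e.\ control of the $\mathring{S}$-pseudo-norm and of $\|u\|_{L^{\alpha(x)}}$). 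Combined with the $A$-coercivity estimate, this delivers the a priori estimate required by Theorem 2.5 of \cite{} on the whole space $Q(\Omega)$.

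The crucial and most delicate step will be the weak compactness of $B$, and here I would lean heavily on the embedding machinery developed in Section 3. The argument is: if $u_{m}\rightharpoonup u$ in $Q(\Omega)$, then by the compact embedding of $\mathring{S}_{1,q_{0}(x)(p_{0}(x)-2),q_{0}(x),\alpha(x)}(\Omega)$ into an appropriate $L^{r(x)}(\Omega)$ space (obtained in Section 3), one gets $|u_{m}|^{p_{0}(x)-2}D_{i}u_{m}\to |u|^{p_{0}(x)-2}D_{i}u$ strongly in $L^{q_{0}(x)}(\Omega)$, and $c(x,u_{m})\to c(x,u)$ in $L^{\alpha^{\prime}(x)}(\Omega)$ by Krasnoselskii-type arguments combined with the Carathéodory hypothesis on $c$ and the bound in (U2). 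This yields $B(u_{m})\to B(u)$ strongly in $W^{-1,q_{0}(x)}(\Omega)+L^{\alpha^{\prime}(x)}(\Omega)$, which is exactly the weak compactness demanded.

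Finally I would assemble the pieces: apply the abstract theorem to conclude existence of a $u\in Q(\Omega)$ solving $A(u)+B(u)=h$, and then unwind the definitions of $A$ and $B$ to recognise (2.4) as being satisfied for all test functions $v\in W_{0}^{1,p_{0}(x)}(\Omega)\cap W_{0}^{1,p_{1}(x)}(\Omega)\cap L^{\alpha(x)}(\Omega)$. The main obstacle I expect is the weak compactness step, because the nonlinear pseudo-norm spaces $\mathring{S}_{\cdot,\cdot,\cdot}$ are not Banach spaces, so the usual reflexivity-and-compact-embedding routine has to be replaced by the careful analysis of the embedding lemmas in Section 3; in particular, verifying that a sequence bounded in the pseudo-norm $[\cdot]_{S_{\gamma,\beta,\theta}}$ is precompact in a $L^{r(x)}$-type space is where the variable-exponent technicalities, rather than the abstract framework, will do the real work.
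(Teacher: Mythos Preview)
Your overall architecture matches the paper's: set $X=W_{0}^{1,p_{1}(x)}(\Omega)$, $S_{gY_{0}}=\mathring{S}_{1,q_{0}(x)(p_{0}(x)-2),q_{0}(x),\alpha(x)}(\Omega)$, verify pseudo-monotonicity of $A$, weak compactness of $B$, and coercivity of $T=A+B$, then invoke Theorem 2.5. The coercivity and the properties of $A$ are handled essentially as in the paper (Lemmas 4.1--4.2).

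However, your weak-compactness argument for $B_{1}$ has a genuine gap. You assert that from a bounded sequence $(u_{m})$ in the $\mathring{S}$-space one obtains $|u_{m}|^{p_{0}(x)-2}D_{i}u_{m}\to |u|^{p_{0}(x)-2}D_{i}u$ \emph{strongly} in $L^{q_{0}(x)}(\Omega)$, via the compact embedding of Section 3 into some $L^{r(x)}$. But those compact embeddings (Theorem 3.11) only give strong convergence of $u_{m}$ itself in $L^{r(x)}$; the quantity $|u_{m}|^{p_{0}(x)-2}D_{i}u_{m}$ contains a first derivative and you cannot upgrade to strong convergence of such a term by a compact embedding into a Lebesgue space. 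The paper circumvents this by a different mechanism: via Theorem 3.8 the sequence $w_{m}:=|u_{m}|^{p_{0}(x)-2}u_{m}$ is bounded in the reflexive space $W_{0}^{1,q_{0}(x)}(\Omega)$, so $D_{i}w_{m}\rightharpoonup D_{i}w_{0}$ weakly in $L^{q_{0}(x)}$. One then uses the chain-rule identity
\[
|u|^{p_{0}(x)-2}D_{i}u=\tfrac{1}{p_{0}(x)-1}\Bigl(D_{i}\bigl(|u|^{p_{0}(x)-2}u\bigr)-(D_{i}p_{0})\,|u|^{p_{0}(x)-2}u\ln|u|\Bigr)
\]
to split $\langle B_{1}(u_{m}),v\rangle$ into a term where the weak convergence of $D_{i}w_{m}$ suffices, plus a lower-order logarithmic term handled by Corollary 3.3 and Theorem 3.11. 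Without this decomposition your passage to the limit in $B_{1}$ is not justified.

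A second, smaller omission: condition 2) of Theorem 2.5 demands not only that $B$ be weakly compact but also the existence of an auxiliary operator $B_{0}$ into a space $Y_{2}$ with $\langle B(u),u\rangle=\varphi(\|B_{0}(u)\|_{Y_{2}})$. The paper supplies this explicitly by taking $B_{01}(u)=-\sum_{i}D_{i}\bigl(|u|^{(p_{0}(x)-2)/2}D_{i}u\bigr)$ into $W^{-1,2}(\Omega)$ and $B_{02}(u)=[c(x,u)u]^{1/2}$ into $L^{2}(\Omega)$, with $\varphi(\tau)=\tau^{2}$ (Lemmas 4.5--4.6). Your proposal does not address this structural requirement, and without it Theorem 2.5 cannot be invoked as stated.
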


We will use the following solvability theorem [24] to prove the Theorem 2.4.
Let $X$ and $Y_{0}$ be reflexive Banach spaces, $Y$ is an arbitrary Banach
space and $S_{gY_{0}}$ is pn-space(pseudo-norm space) [21]. Let $%
A:X\longrightarrow X^{\ast }$ and $B:S_{gY_{0}}\longrightarrow Y$ be
nonlinear operators.

\medskip Assume that the following conditions are satisfied:

\begin{enumerate}
\item[1)] $A:X\longrightarrow X^{\ast }$ is a pseudo-monotone operator, i.e.,

\begin{enumerate}
\item[(i)] $A $ is bounded operator and

\item[(ii)] the conditions $u_{m}\overset{X}{\rightharpoonup }u_{0}$ and
limsup$\left\langle A\left( u_{m}\right) ,\text{ }u_{m}-u\right\rangle \leq
0 $ imply
\begin{equation*}
\text{liminf}\left\langle A\left( u_{m}\right) ,\text{ }u_{m}-v\right\rangle
\geq \left\langle A\left( u\right) ,\text{ }u-v\right\rangle ,\text{ \ }%
\forall v\in X.
\end{equation*}
\end{enumerate}

\item[2)] $B:S_{gY_{0}}\longrightarrow Y$ is weakly compact. Furthermore,
there exists a mapping $B_{0}:X_{0}\cap S_{gY_{0}}\longrightarrow
Y_{2}\subseteq Y$ such that $B_{0}$ is weakly compact from $X_{0}\cap
S_{gY_{0}}$ to $Y_{2}$ where $X_{0}$ is a separable topological vector space
which is dense in $X,$ $Y^{\ast }$ and $S_{gY_{0}}$ and there exists a
continuous nondecreasing function $\varphi :%
\mathbb{R}
_{+}^{1}\longrightarrow
\mathbb{R}
_{+}^{1},$ such that $\varphi \in C^{0}$ and
\begin{equation*}
\left\langle B\left( u\right) ,\text{ }u\right\rangle =\varphi \left(
\left\Vert B_{0}\left( u\right) \right\Vert _{Y_{2}}\right) ,\text{ \ }%
\forall u\in S_{gY_{0}}.
\end{equation*}

\item[3)] The operator $T:=A+B$ is coercive in the generalized sense on $%
X_{0},$ i.e. for each $u\in X_{0}$ with $\left\Vert u\right\Vert _{X}$, $%
[u]_{S_{gY_{0}}}\geq M$ we have
\begin{equation*}
\left\langle T\left( u\right) ,\text{ }u\right\rangle =\left\langle A\left(
u\right) +B\left( u\right) ,\text{ }u\right\rangle \geq \lambda _{0}\left(
\left\Vert u\right\Vert _{X}\right) \left\Vert u\right\Vert _{X}+\lambda
_{1}\left( [u]_{S_{gY_{0}}}\right) [u]_{S_{gY_{0}}}
\end{equation*}%
where $\lambda _{0},$ $\lambda _{1}\in C^{0}$ such that as $\tau \nearrow
\infty ,$ $\lambda _{0}\left( \tau \right) ,$ $\lambda _{1}\left( \tau
\right) \nearrow \infty $ and $M>0$ is some number.
\end{enumerate}

\begin{theorem}
$[24]$ Let conditions 1)-3) be satisfied. Then the equation%
\begin{equation*}
T\left( u\right) =A\left( u\right) +B\left( u\right) =y,\text{ \ }y\in
X^{\ast }+Y,
\end{equation*}%
is solvable in $X\cap S_{gY_{0}}$ for any $y\in X^{\ast }+Y$ satisfying%
\begin{equation*}
\sup \left\{ \frac{\left\langle y,\text{ }u\right\rangle }{\left\Vert
u\right\Vert _{X}+[u]_{S_{gY_{0}}}}:\text{ }u\in X_{0}\right\} <\infty .
\end{equation*}
\end{theorem}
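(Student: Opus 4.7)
My plan is to prove Theorem 2.5 by a Galerkin approximation built on the separable space $X_0$. Using the density and separability of $X_0$, I would fix an increasing sequence of finite-dimensional subspaces $X_0^{(1)} \subset X_0^{(2)} \subset \cdots$ whose union is dense in $X$, in $Y^*$, and in $S_{gY_0}$ (in the pn-topology). On each $X_0^{(m)}$ I look for $u_m$ solving the finite-dimensional system $\langle A(u_m) + B(u_m) - y,\, v\rangle = 0$ for all $v \in X_0^{(m)}$. Since $A$ is bounded and $B$ is defined on $X_0 \cap S_{gY_0} \subseteq S_{gY_0}$, the left-hand side defines a continuous vector field on $X_0^{(m)}$; coercivity in the generalized sense (condition 3) together with the admissibility condition on $y$ shows that this field points outward on a sufficiently large ball, so a standard consequence of Brouwer's fixed-point theorem (an acute-angle / Hartman--Stampacchia argument) produces $u_m$.

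The coercivity hypothesis, applied to the identity $\langle T(u_m), u_m\rangle = \langle y, u_m\rangle$ and combined with the admissibility bound $|\langle y, u_m\rangle| \le C(\|u_m\|_X + [u_m]_{S_{gY_0}})$, yields uniform a priori estimates $\|u_m\|_X \le C$ and $[u_m]_{S_{gY_0}} \le C$, because the functions $\lambda_0,\lambda_1$ tend to infinity. Reflexivity of $X$ gives a subsequence (not relabeled) with $u_m \rightharpoonup u_0$ in $X$. Boundedness of $A$ gives $A(u_m) \rightharpoonup \xi$ in $X^*$ along a further subsequence. Using that $B: S_{gY_0} \to Y$ is weakly compact on the bounded sequence $\{u_m\}$ in the pn-space, I extract $B(u_m) \rightharpoonup \eta$ in $Y$; similarly $B_0(u_m) \rightharpoonup \zeta$ in $Y_2$. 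Passing to the limit in the Galerkin identity along test vectors $v \in X_0^{(k)}$ (any fixed $k$) and using density, I get $\xi + \eta = y$ in $X^* + Y$.

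The crux, and what I expect to be the main obstacle, is showing that $\xi = A(u_0)$ via pseudo-monotonicity, which requires verifying $\limsup_m \langle A(u_m), u_m - u_0\rangle \le 0$. To extract this from the Galerkin identity $\langle A(u_m), u_m\rangle = \langle y, u_m\rangle - \langle B(u_m), u_m\rangle$, I must be able to pass to the limit in the $B$-term. This is exactly the role of the structural hypothesis $\langle B(u), u\rangle = \varphi(\|B_0(u)\|_{Y_2})$: combining weak compactness of $B_0$ with some form of lower/upper semicontinuity of $\|\cdot\|_{Y_2}$ along the weakly convergent sequence (and, where needed, the fact that $X_0$ is dense in $Y^*$ so $\langle y, u_m\rangle \to \langle y, u_0\rangle$), one obtains $\limsup_m \langle B(u_m), u_m\rangle \ge \varphi(\|B_0(u_0)\|_{Y_2}) = \langle B(u_0), u_0\rangle$ (after identifying $\eta = B(u_0)$ using weak compactness and the continuity of $\varphi$). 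This, together with $\langle y, u_m\rangle \to \langle y, u_0\rangle$, yields $\limsup_m \langle A(u_m), u_m - u_0\rangle \le 0$.

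With this inequality in hand, pseudo-monotonicity (condition 1(ii)) gives $\liminf_m \langle A(u_m), u_m - v\rangle \ge \langle A(u_0), u_0 - v\rangle$ for every $v \in X$; choosing $v = u_0 \pm tw$ and letting $t \to 0$ in the usual way identifies $\xi = A(u_0)$. Combined with $\eta = B(u_0)$ and $\xi + \eta = y$, this gives $A(u_0) + B(u_0) = y$ with $u_0 \in X \cap S_{gY_0}$, completing the proof. The only nontrivial regularity issue is checking that the passage $B(u_m) \rightharpoonup B(u_0)$ is compatible with both weak convergence in $X$ and boundedness in $S_{gY_0}$, which is exactly what the weak compactness hypothesis on $B$ and $B_0$ is designed to provide.
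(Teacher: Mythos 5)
The paper does not actually prove this statement: Theorem 2.5 is imported verbatim from reference [24] and used as a black box to establish Theorem 2.4, so there is no in-paper argument to compare yours against. Your Galerkin reconstruction is the standard route for theorems of this type, and the architecture is sound: finite-dimensional solvability via an acute-angle/Brouwer argument driven by generalized coercivity and the admissibility condition on $y$; uniform bounds on $\left\Vert u_m\right\Vert_X$ and $[u_m]_{S_{gY_0}}$; extraction of weak limits $\xi$, $\eta$; the identity $\langle A(u_m),u_m\rangle=\langle y,u_m\rangle-\langle B(u_m),u_m\rangle$ combined with weak lower semicontinuity of the $Y_2$-norm, monotonicity and continuity of $\varphi$, and the identification $\eta=B(u_0)$ to obtain $\limsup_m\langle A(u_m),u_m-u_0\rangle\le 0$; and finally pseudo-monotonicity to conclude $\xi=A(u_0)$.

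Two points deserve more care than you give them. First, $S_{gY_0}$ is a pn-space, not a linear space, so "bounded sequences have weakly convergent subsequences" and "the weak limit in $S_{gY_0}$ coincides with the weak limit $u_0$ obtained in $X$" are not free; this is precisely what the auxiliary operator $B_0$ and the separable space $X_0$ (dense simultaneously in $X$, $Y^*$ and $S_{gY_0}$) are there to mediate, whereas your sketch invokes $B_0$ only in the $\limsup$ step and otherwise treats weak convergence in $S_{gY_0}$ as if it were automatic. Second, solvability of the finite-dimensional Galerkin systems via Brouwer requires continuity of the finite-dimensional vector field: boundedness plus pseudo-monotonicity yields demicontinuity of $A$, but for $B$ you are given only boundedness and weak compactness, so you must argue (or extract from the definition of weak compactness adopted in [21] and [24]) that $v\mapsto\langle B(v),w\rangle$ is continuous on finite-dimensional subsets of $X_0$. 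Neither issue is fatal, but both are exactly where the real content of the cited result in [24] lives, and a complete write-up would have to address them.
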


\section{Preliminary Results}

In this section, we study the function classes which are defined
in Section 2 that actually is required to investigate the problem
(1.1). First, we establish some integral inequalities to realize
the structure of these spaces. Afterwards, we show that these
spaces are complete metric spaces. Moreover, we prove some lemmas
and theorems on continuous and compact embedding etc. for these
spaces and also indicate their relation with the Lebesgue and
Sobolev spaces with variable exponent.

\subsection{Some Integral Inequalities}

In this subsection, we derive some inequalities which are given as lemmas.
As the proofs of these lemmas can be obtained easily by using Young's, H\"{o}%
lder inequalities and by calculations (and also see [20, Lemma 4.1]) hence,
we skip the proofs for the sake of brevity. Throughout this section we
assume that $\Omega \subset
\mathbb{R}
^{n}\left( n\geq 2\right) $ is a bounded domain with Lipschitz boundary.

\begin{lemma}
Let $\zeta $, $\xi$ $\in M_{0}\left( \Omega \right)$ and $\zeta
\left(
x\right) \geq \xi \left( x\right) $ a.e. $x\in \Omega .$ Then the inequality%
\begin{equation}
\int\limits_{\Omega }\left\vert u\right\vert ^{\xi \left( x\right) }dx\leq
\int\limits_{\Omega }\left\vert u\right\vert ^{\zeta \left( x\right)
}dx+\left\vert \Omega \right\vert ,\text{ \ \ }\forall u\in L^{\zeta \left(
x\right) }\left( \Omega \right)  \tag{3.1}
\end{equation}%
holds.
\end{lemma}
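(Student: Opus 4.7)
The plan is to split the domain according to whether $|u(x)|\le 1$ or $|u(x)|>1$ and estimate the two resulting integrals separately; this is the standard trick for comparing $L^{p(\cdot)}$ modulars with different exponents.

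First I would set $\Omega_1:=\{x\in\Omega:|u(x)|\le 1\}$ and $\Omega_2:=\{x\in\Omega:|u(x)|>1\}$, both measurable since $u$ is. On $\Omega_1$, because $\xi(x)\ge 1>0$ a.e.\ (by $\xi\in M_0(\Omega)$) and $0\le |u(x)|\le 1$, I have the pointwise bound $|u(x)|^{\xi(x)}\le 1$, so
\[
\int_{\Omega_1}|u|^{\xi(x)}\,dx\le |\Omega_1|\le |\Omega|.
\]

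Next, on $\Omega_2$, because $|u(x)|>1$ and the exponent function $\xi$ satisfies $\xi(x)\le\zeta(x)$ a.e., the map $t\mapsto |u(x)|^{t}$ is nondecreasing in $t$ for fixed $x\in\Omega_2$, so $|u(x)|^{\xi(x)}\le |u(x)|^{\zeta(x)}$ pointwise. Integrating over $\Omega_2$ and extending the upper bound to all of $\Omega$ (the integrand is nonnegative) gives
\[
\int_{\Omega_2}|u|^{\xi(x)}\,dx\le \int_{\Omega_2}|u|^{\zeta(x)}\,dx\le \int_{\Omega}|u|^{\zeta(x)}\,dx.
\]

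Finally I would add the two estimates to conclude (3.1); the finiteness of the right-hand side is guaranteed by $u\in L^{\zeta(x)}(\Omega)$ together with $|\Omega|<\infty$. There is no real obstacle here — the only point to mention explicitly is that $\xi,\zeta\in M_0(\Omega)$ forces $\xi(x),\zeta(x)\in[1,\infty)$ a.e., which is precisely what makes the monotonicity argument on $\Omega_2$ and the boundedness argument on $\Omega_1$ work simultaneously.
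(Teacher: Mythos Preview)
Your argument is correct. The paper does not actually write out a proof of this lemma; it merely remarks that Lemmas~3.1--3.2 and Corollary~3.3 ``can be obtained easily by using Young's, H\"{o}lder inequalities and by calculations'' and skips the details. Your domain-splitting argument is the standard elementary route and is entirely sufficient here; the alternative the paper alludes to would be to apply Young's inequality pointwise in the form $|u(x)|^{\xi(x)}\cdot 1 \le \tfrac{\xi(x)}{\zeta(x)}|u(x)|^{\zeta(x)} + \tfrac{\zeta(x)-\xi(x)}{\zeta(x)} \le |u(x)|^{\zeta(x)} + 1$ (with exponent $p=\zeta(x)/\xi(x)\ge 1$), which after integration yields the same estimate. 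The two approaches are essentially equivalent in difficulty and content---your splitting trick is arguably the more transparent of the two, since it avoids invoking any named inequality at all.
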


\begin{lemma}
Assume that $\zeta $ $\in M_{0}\left( \Omega \right)$ and the
numbers $\eta$ and $\epsilon$ satisfy $\eta\geq 1,$ $\epsilon >0.$
Then for every $u\in
L^{\zeta \left( x\right) +\epsilon }\left( \Omega \right) $%
\begin{equation}
\int\limits_{\Omega }\left\vert u\right\vert ^{\zeta \left(
x\right) }\left\vert \ln \left\vert u\right\vert \right\vert
^{\eta }dx\leq N_{1}\int\limits_{\Omega }\left\vert u\right\vert
^{\zeta \left( x\right) +\epsilon }dx+N_{2}  \tag{3.2}
\end{equation}%
is satisfied. Here $N_{1}\equiv N_{1}\left( \epsilon ,\eta \right) >0$ and $%
N_{2}\equiv N_{2}\left( \epsilon ,\eta ,\left\vert \Omega
\right\vert \right) >0$ are constants.
\end{lemma}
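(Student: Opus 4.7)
The plan is to split the domain into the two regions $\Omega_1 := \{x\in\Omega : |u(x)|<1\}$ and $\Omega_2 := \{x\in\Omega : |u(x)|\geq 1\}$, and estimate the integrand separately on each set by elementary one–variable calculus. Since the exponent $\zeta(x)+\epsilon$ is bounded and $\Omega$ has finite measure, every resulting estimate can be made uniform in $x$.

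On $\Omega_2$, I would exploit that for every $\epsilon>0$ and $\eta\geq 1$ the function $t\mapsto (\ln t)^\eta/t^\epsilon$ attains a finite maximum $C_1=C_1(\epsilon,\eta)$ on $[1,\infty)$ (at $t=e^{\eta/\epsilon}$, giving $C_1=(\eta/(\epsilon e))^\eta$). Therefore $|\ln|u||^\eta\leq C_1 |u|^\epsilon$ pointwise on $\Omega_2$, and hence
$$\int_{\Omega_2}|u|^{\zeta(x)}|\ln|u||^\eta\,dx\ \leq\ C_1\int_\Omega |u|^{\zeta(x)+\epsilon}\,dx,$$
which is finite by the hypothesis $u\in L^{\zeta(x)+\epsilon}(\Omega)$. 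On $\Omega_1$ I would use that for any $s>0$ the map $t\mapsto t^{s}|\ln t|^\eta$ is bounded on $(0,1]$ with maximum $(\eta/s)^\eta e^{-\eta}$; since $\zeta\in M_0(\Omega)$ forces $\zeta(x)\geq \zeta^-\geq 1$ and $t^{\zeta(x)}\leq t^{\zeta^-}$ for $t\in(0,1]$, this yields the pointwise bound $|u|^{\zeta(x)}|\ln|u||^\eta\leq \eta^\eta e^{-\eta}$ on $\Omega_1$, whose integral is at most $\eta^\eta e^{-\eta}|\Omega|$. Adding the two pieces gives the stated inequality with $N_1=C_1(\epsilon,\eta)$ and $N_2=\eta^\eta e^{-\eta}|\Omega|$.

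There is no real obstacle here, since the statement is essentially a calibrated calculus lemma; the only point requiring any care is to make sure the estimate on $\Omega_1$ is genuinely uniform in $x\in\Omega$. This is where the structural assumption $\zeta^-\geq 1$ from $\zeta\in M_0(\Omega)$ is used, allowing one to replace $\zeta(x)$ by the fixed exponent $\zeta^-$ when $|u|<1$ and thus reduce the bound to a single universal constant depending only on $\eta$ (and, through $C_1$, on $\epsilon$).
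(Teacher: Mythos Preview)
Your argument is correct. The paper itself omits the proof of this lemma, remarking only that it follows ``by using Young's, H\"{o}lder inequalities and by calculations'' and referring to an analogous lemma in another work; your approach via the pointwise splitting $\{|u|<1\}\cup\{|u|\geq 1\}$ and the elementary single-variable maxima of $t\mapsto(\ln t)^{\eta}/t^{\epsilon}$ on $[1,\infty)$ and $t\mapsto t^{s}|\ln t|^{\eta}$ on $(0,1]$ is, if anything, more direct than what the authors hint at, and it makes transparent exactly which hypotheses are used where (in particular that $\zeta^{-}\geq 1$ from $\zeta\in M_{0}(\Omega)$ is what makes the bound on $\Omega_{1}$ uniform in $x$). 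Note also that your $N_{2}=\eta^{\eta}e^{-\eta}|\Omega|$ happens not to depend on $\epsilon$, which is of course consistent with (indeed sharper than) the stated form $N_{2}=N_{2}(\epsilon,\eta,|\Omega|)$.
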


\begin{corollary}
Let $\zeta $, $\eta $ $\in M_{0}\left( \Omega \right)$. Then for
$\epsilon
>0,$ the inequality%
\begin{equation}
\int\limits_{\Omega }\left\vert u\right\vert ^{\zeta \left(
x\right) }\left\vert \ln \left\vert u\right\vert \right\vert
^{\eta \left( x\right) }dx\leq N_{3}\int\limits_{\Omega
}\left\vert u\right\vert ^{\zeta \left( x\right) +\epsilon
}dx+N_{4},\text{ \ \ }\forall u\in L^{\zeta \left( x\right)
+\epsilon }\left( \Omega \right)  \tag{3.3}
\end{equation}%
holds. Here $N_{3}\equiv N_{3}\left( \epsilon ,\text{ }\eta
^{+}\right) >0$ and $N_{4}\equiv N_{4}\left( \epsilon , \eta ^{+},
\left\vert \Omega \right\vert \right) >0$ are constants.
\end{corollary}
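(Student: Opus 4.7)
The plan is to reduce the variable-exponent factor $|\ln|u||^{\eta(x)}$ to the constant-exponent situation already covered by Lemma 3.2, and then absorb the leftover piece using Lemma 3.1.

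First I would observe the elementary bound valid for any $t\geq 0$ and any $\eta(x)$ with $1\leq\eta^-\leq\eta(x)\leq\eta^+<\infty$, namely
\begin{equation*}
t^{\eta(x)}\;\leq\;t^{\eta^+}+1.
\end{equation*}
Indeed, if $t\geq 1$ then $t^{\eta(x)}\leq t^{\eta^+}$, and if $0\leq t<1$ then $t^{\eta(x)}\leq 1$ since $\eta(x)\geq 0$. Applying this pointwise with $t=|\ln|u(x)||$ yields
\begin{equation*}
\int_{\Omega}|u|^{\zeta(x)}|\ln|u||^{\eta(x)}\,dx\;\leq\;\int_{\Omega}|u|^{\zeta(x)}|\ln|u||^{\eta^+}\,dx\;+\;\int_{\Omega}|u|^{\zeta(x)}\,dx.
\end{equation*}

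Next, since $\eta^+\geq 1$, Lemma 3.2 applied with the constant exponent $\eta=\eta^+$ controls the first integral on the right by
\begin{equation*}
N_{1}(\epsilon,\eta^+)\int_{\Omega}|u|^{\zeta(x)+\epsilon}\,dx\;+\;N_{2}(\epsilon,\eta^+,|\Omega|).
\end{equation*}
For the residual term $\int_{\Omega}|u|^{\zeta(x)}\,dx$, I would invoke Lemma 3.1 with $\xi(x)=\zeta(x)$ and the larger exponent $\zeta(x)+\epsilon\geq\zeta(x)$, which gives
\begin{equation*}
\int_{\Omega}|u|^{\zeta(x)}\,dx\;\leq\;\int_{\Omega}|u|^{\zeta(x)+\epsilon}\,dx\;+\;|\Omega|.
\end{equation*}

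Combining these three estimates, I would set $N_{3}:=N_{1}(\epsilon,\eta^+)+1$ and $N_{4}:=N_{2}(\epsilon,\eta^+,|\Omega|)+|\Omega|$ to reach the claimed inequality (3.3); the finiteness of the right-hand side for any $u\in L^{\zeta(x)+\epsilon}(\Omega)$ is immediate from $u\in L^{\zeta(x)+\epsilon}(\Omega)$. There is no real obstacle here since the essential analytic work is already encoded in Lemma 3.2; the only mild point requiring care is the pointwise reduction $t^{\eta(x)}\leq t^{\eta^+}+1$, which must be justified separately on $\{t\geq 1\}$ and $\{t<1\}$ because $\eta(x)$ is neither uniformly an upper nor uniformly a lower exponent for $t$.
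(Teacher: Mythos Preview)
Your argument is correct. The paper does not actually spell out a proof of Corollary 3.3; it only remarks that the results of this subsection follow easily from Young's and H\"older inequalities and refers to [20, Lemma~4.1]. Since Corollary 3.3 is stated as a corollary of Lemma 3.2, the reduction you carry out --- replacing the variable power $\eta(x)$ by $\eta^{+}$ via the pointwise inequality $t^{\eta(x)}\leq t^{\eta^{+}}+1$ and then invoking Lemma 3.2 for the logarithmic term and Lemma 3.1 for the residual $\int_\Omega |u|^{\zeta(x)}\,dx$ --- is exactly the natural route the paper has in mind, and your explicit constants $N_3=N_1(\epsilon,\eta^{+})+1$, $N_4=N_2(\epsilon,\eta^{+},|\Omega|)+|\Omega|$ match the stated dependencies.
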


\subsection{Generalized Nonlinear Spaces and Embedding
Theorems}

In this section, we examine the properties of the spaces $S_{1,\gamma \left(
x\right) ,\beta \left( x\right) ,\theta \left( x\right) }\left( \Omega
\right) $ and their connection with the known spaces. Investigating most of
boundary value problems on its own space leads to obtain better results.
Henceforth considered problem (1.1) is investigated on its own space (i.e. $%
S_{1,\gamma \left( x\right) ,\beta \left( x\right) ,\theta \left( x\right)
}\left( \Omega \right) $). Unlike linear boundary value problems, the sets
generated by nonlinear problems are subsets of linear spaces, but not
possessing the linear structure [21-24].\newline
Note that, from now on unless additional conditions are imposed, all the
functions $\gamma ,$ $\beta $ and $\theta $ will satisfy the conditions
given in Proposition 2.2.

\begin{lemma}
Let $u\in $ $S_{1,\gamma \left( x\right) ,\beta \left( x\right) ,\theta
\left( x\right) }\left( \Omega \right) $ and $\lambda _{u}:=[u]_{S_{\gamma
,\beta ,\theta }}$ then the following inequality%
\begin{equation*}
\max \{\lambda _{u}^{\gamma ^{-}+\beta ^{-}},\lambda _{u}^{\theta
^{+}}\}\geq \Re ^{\gamma ,\beta ,\theta }\left( u\right) \geq \min \{\lambda
_{u}^{\gamma ^{-}+\beta ^{-}},\lambda _{u}^{\theta ^{+}}\}
\end{equation*}%
holds.
\end{lemma}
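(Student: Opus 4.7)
The plan is to adapt the classical argument giving the two-sided Luxemburg bound (2.2) to the two-term modular appearing here. First I would record the algebraic simplification
\[
\left|\frac{|u|^{\gamma(x)/\beta(x)}\,D_{i}u}{\lambda^{\gamma(x)/\beta(x)+1}}\right|^{\beta(x)}=\frac{|u|^{\gamma(x)}|D_{i}u|^{\beta(x)}}{\lambda^{\gamma(x)+\beta(x)}},
\]
which lets us write $\lambda_{u}=[u]_{S_{\gamma,\beta,\theta}}$ as the unique value for which $\Phi_{u}(\lambda_{u})=1$, where
\[
\Phi_{u}(\lambda):=\int_{\Omega}\frac{|u|^{\theta(x)}}{\lambda^{\theta(x)}}\,dx+\sum_{i=1}^{n}\int_{\Omega}\frac{|u|^{\gamma(x)}|D_{i}u|^{\beta(x)}}{\lambda^{\gamma(x)+\beta(x)}}\,dx.
\]
For $u\not\equiv 0$ in $S_{1,\gamma,\beta,\theta}(\Omega)$, $\Phi_{u}$ is continuous and strictly decreasing on $(0,\infty)$ with $\Phi_{u}(0^{+})=\infty$ and $\Phi_{u}(\infty)=0$, so the infimum in the definition of the pseudo-norm is attained. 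The lemma then reduces to sandwiching $\Phi_{u}(\lambda_{u})$ between scalar multiples of $\Re^{\gamma,\beta,\theta}(u)$.

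The key pointwise tool is the monotonicity of $a\mapsto\lambda_{u}^{-a}$: nondecreasing in $a$ when $\lambda_{u}\le 1$ and nonincreasing when $\lambda_{u}\ge 1$. Combined with the hypothesis $\theta(x)\ge\gamma(x)+\beta(x)+\varepsilon_{0}$ inherited from Proposition~2.2, one obtains the chain
\[
\gamma^{-}+\beta^{-}\le(\gamma+\beta)^{-}\le\theta^{-}\le\theta(x)\le\theta^{+},\qquad (\gamma+\beta)(x)\le(\gamma+\beta)^{+}\le\theta^{+}.
\]
This lets me bracket both weights $\lambda_{u}^{-\theta(x)}$ and $\lambda_{u}^{-(\gamma(x)+\beta(x))}$ appearing in $\Phi_{u}(\lambda_{u})$ simultaneously by the extremes $\lambda_{u}^{-\theta^{+}}$ and $\lambda_{u}^{-(\gamma^{-}+\beta^{-})}$.

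The argument then splits into two cases. When $\lambda_{u}\le 1$, bounding each weight from above by $\lambda_{u}^{-\theta^{+}}$ gives $\Phi_{u}(\lambda_{u})\le\Re^{\gamma,\beta,\theta}(u)\,\lambda_{u}^{-\theta^{+}}$ and hence $\Re^{\gamma,\beta,\theta}(u)\ge\lambda_{u}^{\theta^{+}}$; bounding each weight from below by $\lambda_{u}^{-(\gamma^{-}+\beta^{-})}$ yields $\Re^{\gamma,\beta,\theta}(u)\le\lambda_{u}^{\gamma^{-}+\beta^{-}}$. Since $\theta^{+}\ge\gamma^{-}+\beta^{-}$ forces $\lambda_{u}^{\theta^{+}}\le\lambda_{u}^{\gamma^{-}+\beta^{-}}$ in this regime, these bounds realize exactly $\min\le\Re^{\gamma,\beta,\theta}(u)\le\max$. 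When $\lambda_{u}\ge 1$ the direction of monotonicity of $a\mapsto\lambda_{u}^{-a}$ reverses, so the same two pointwise comparisons produce $\Re^{\gamma,\beta,\theta}(u)\ge\lambda_{u}^{\gamma^{-}+\beta^{-}}$ and $\Re^{\gamma,\beta,\theta}(u)\le\lambda_{u}^{\theta^{+}}$, again matching $\min$ and $\max$ (now in the opposite order).

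The only genuine obstacle is bookkeeping: ensuring that in each regime the pointwise inequalities on the weights are invoked in the right direction, and that the comparisons $\gamma^{-}+\beta^{-}\le\theta^{+}$ and $(\gamma+\beta)^{+}\le\theta^{+}$ are used precisely where needed. The latter crucially requires the strict gap $\varepsilon_{0}>0$ from Proposition~2.2, since a priori one only has $(\gamma+\beta)^{+}\le\gamma^{+}+\beta^{+}$, which is not in general comparable with $\theta^{+}$. Beyond that, the proof is a direct two-term analog of the classical derivation of (2.2).
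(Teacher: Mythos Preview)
Your proposal is correct and follows essentially the same route as the paper: both rewrite $\Re^{\gamma,\beta,\theta}(u)$ by inserting $\lambda_{u}$ into each integrand, split into the cases $\lambda_{u}\ge 1$ and $\lambda_{u}<1$, and use monotonicity of $\lambda_{u}^{a}$ together with the exponent ordering coming from Proposition~2.2 to extract the extreme powers $\lambda_{u}^{\gamma^{-}+\beta^{-}}$ and $\lambda_{u}^{\theta^{+}}$. One minor remark: the inequality $(\gamma+\beta)^{+}\le\theta^{+}$ you flag as needing the strict gap $\varepsilon_{0}>0$ actually follows already from the pointwise bound $\gamma(x)+\beta(x)\le\theta(x)$, since $\sup_{x}(\gamma+\beta)(x)\le\sup_{x}\theta(x)$; the $\varepsilon_{0}$ is not essential at this step.
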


\begin{proof}
For $u\in $ $S_{1,\gamma \left( x\right) ,\beta \left( x\right) ,\theta
\left( x\right) }\left( \Omega \right) ,$
\begin{equation*}
\Re ^{\gamma ,\beta ,\theta }\left( u\right) =\int\limits_{\Omega
}\left\vert u\right\vert ^{\theta \left( x\right)
}dx+\sum_{i=1}^{n}\int\limits_{\Omega }\left\vert u\right\vert ^{\gamma
\left( x\right) }\left\vert D_{i}u\right\vert ^{\beta \left( x\right) }dx
\end{equation*}%
\begin{equation*}
=\int\limits_{\Omega }\lambda _{u}^{\theta \left( x\right) }\left\vert \frac{%
u}{\lambda _{u}}\right\vert ^{\theta \left( x\right)
}dx+\sum_{i=1}^{n}\int\limits_{\Omega }\lambda _{u}^{\gamma \left( x\right)
+\beta \left( x\right) }\left\vert \frac{\left\vert u\right\vert ^{\frac{%
\gamma \left( x\right) }{\beta \left( x\right) }}D_{i}u}{\lambda _{u}^{\frac{%
\gamma \left( x\right) }{\beta \left( x\right) }+1}}\right\vert ^{\beta
\left( x\right) }dx,
\end{equation*}%
if $\lambda _{u}\geq 1,$ we have
\begin{equation*}
\geq \lambda _{u}^{\theta ^{-}}\int\limits_{\Omega }\left\vert \frac{u}{%
\lambda _{u}}\right\vert ^{\theta \left( x\right) }dx+\lambda _{u}^{\gamma
^{-}+\beta ^{-}}\sum_{i=1}^{n}\int\limits_{\Omega }\left\vert \frac{%
\left\vert u\right\vert ^{\frac{\gamma \left( x\right) }{\beta \left(
x\right) }}D_{i}u}{\lambda _{u}^{\frac{\gamma \left( x\right) }{\beta \left(
x\right) }+1}}\right\vert ^{\beta \left( x\right) }dx
\end{equation*}%
\begin{equation*}
\geq \lambda _{u}^{\gamma ^{-}+\beta ^{-}}\left( \int\limits_{\Omega
}\left\vert \frac{u}{\lambda _{u}}\right\vert ^{\theta \left( x\right)
}dx+\sum_{i=1}^{n}\int\limits_{\Omega }\left\vert \frac{\left\vert
u\right\vert ^{\frac{\gamma \left( x\right) }{\beta \left( x\right) }}D_{i}u%
}{\lambda _{u}^{\frac{\gamma \left( x\right) }{\beta \left( x\right) }+1}}%
\right\vert ^{\beta \left( x\right) }dx\right),
\end{equation*}%
from the definition of $[.]_{S_{\gamma ,\beta },\theta }$ and last
inequality, we obtain%
\begin{equation*}
\Re ^{\gamma ,\beta ,\theta }\left( u\right)\geq \lambda _{u}^{\gamma
^{-}+\beta ^{-}}.
\end{equation*}%
Obviously, if $0<\lambda _{u}<1$ then $\Re ^{\gamma ,\beta ,\theta }\left(
u\right) \geq \lambda _{u}^{\theta ^{+}}.$ Similarly, we can show the other
side of the inequality thus, the proof is complete.
\end{proof}

\begin{theorem}
Assume that $p \in M_{0}\left( \Omega \right)$ and $p\left(
x\right) \geq
\theta \left( x\right) $ a.e. $x\in \Omega .$ Then, we have the embedding%
\begin{equation}
W^{1,\text{ }p\left( x\right) }\left( \Omega \right) \subset S_{1,\gamma
\left( x\right) ,\beta \left( x\right) ,\theta \left( x\right) }\left(
\Omega \right) .  \tag{3.4}
\end{equation}
\end{theorem}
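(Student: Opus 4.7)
The plan is to take an arbitrary $u \in W^{1,p(x)}(\Omega)$ and verify the three finiteness conditions characterizing $S_{1,\gamma(x),\beta(x),\theta(x)}(\Omega)$: integrability in $L^1$, finiteness of $\int_\Omega |u|^{\theta(x)}\,dx$, and finiteness of each $\int_\Omega |u|^{\gamma(x)}|D_i u|^{\beta(x)}\,dx$. By Proposition 2.2 this is equivalent to showing $\Re^{\gamma,\beta,\theta}(u) < \infty$, so I will work with that functional throughout.

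The first two items are immediate from Lemma 3.1. Since $p(x)\ge \theta(x)$ and $\Omega$ is bounded, Lemma 3.1 yields $\int_\Omega |u|^{\theta(x)}\,dx \le \int_\Omega |u|^{p(x)}\,dx + |\Omega| < \infty$, and the same inequality applied with exponent $1 \le p(x)$ gives $u \in L^1(\Omega)$. The work is in the gradient term.

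For the crucial mixed integrals, my plan is to apply Young's inequality pointwise with the pair of dual exponents
\begin{equation*}
r(x):=\frac{p(x)}{\beta(x)}, \qquad r'(x):=\frac{p(x)}{p(x)-\beta(x)},
\end{equation*}
which is legal because the condition $\theta(x)\ge \gamma(x)+\beta(x)+\varepsilon_0$ from Proposition 2.2 combined with the hypothesis $p(x)\ge \theta(x)$ gives $p(x) > \beta(x)$ (in fact $p(x)\ge \gamma(x)+\beta(x)+\varepsilon_0$). This yields
\begin{equation*}
|u|^{\gamma(x)}|D_iu|^{\beta(x)} \le \tfrac{\beta(x)}{p(x)}|D_iu|^{p(x)} + \tfrac{p(x)-\beta(x)}{p(x)}|u|^{\gamma(x) r'(x)}.
\end{equation*}
Again using $\gamma(x)+\beta(x)+\varepsilon_0 \le p(x)$, a direct computation shows $\gamma(x)r'(x) = \gamma(x)p(x)/(p(x)-\beta(x)) \le p(x)$, so Lemma 3.1 bounds $\int_\Omega |u|^{\gamma(x) r'(x)}\,dx$ by $\int_\Omega |u|^{p(x)}\,dx+|\Omega|$. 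Since the coefficients $\beta(x)/p(x)$ and $(p(x)-\beta(x))/p(x)$ are bounded by constants depending only on $p^\pm,\beta^\pm$, integrating the pointwise estimate over $\Omega$ yields
\begin{equation*}
\int_\Omega |u|^{\gamma(x)}|D_iu|^{\beta(x)}\,dx \le C_1\!\int_\Omega |D_iu|^{p(x)}\,dx + C_2\!\int_\Omega |u|^{p(x)}\,dx + C_3,
\end{equation*}
which is finite since $u \in W^{1,p(x)}(\Omega)$. Summing over $i$ and adding the $\theta$-term completes the argument.

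The only delicate step is checking that the exponent $\gamma(x)r'(x)$ lands at or below $p(x)$; everything else is routine variable-exponent bookkeeping. The inequality $\gamma(x)+\beta(x)+\varepsilon_0 \le p(x)$, obtained by chaining the standing hypothesis of Proposition 2.2 with $p(x)\ge\theta(x)$, is precisely what makes the Young pairing compatible with the integrability supplied by $W^{1,p(x)}$, so I expect no real obstacle beyond tracking the exponents carefully.
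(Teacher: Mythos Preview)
Your proposal is correct and follows essentially the same route as the paper: bound $\int_\Omega |u|^{\theta(x)}\,dx$ via Lemma~3.1, split the mixed term $|u|^{\gamma(x)}|D_iu|^{\beta(x)}$ by Young's inequality with exponents $p(x)/\beta(x)$ and $p(x)/(p(x)-\beta(x))$, then invoke Lemma~3.1 again on the resulting power $|u|^{\gamma(x)p(x)/(p(x)-\beta(x))}$. If anything, you are more explicit than the paper in checking that $p(x)>\beta(x)$ and that $\gamma(x)p(x)/(p(x)-\beta(x))\le p(x)$, both of which follow from chaining $p(x)\ge\theta(x)$ with the standing hypothesis $\theta(x)\ge\gamma(x)+\beta(x)+\varepsilon_0$ of Proposition~2.2.
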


\begin{proof}
Let $u\in W^{1,\text{ }p\left( x\right) }\left( \Omega \right) ,$
as a consequence of Lemma 3.4 to obtain the embedding (3.4) it is
sufficient to show that $\Re ^{\gamma ,\beta ,\theta }\left(
u\right) $ is finite (i.e. $\Re ^{\gamma ,\beta ,\theta }\left(
u\right) <\infty$)
\begin{equation*}
\Re ^{\gamma ,\beta ,\theta }\left( u\right) =\int\limits_{\Omega
}\left\vert u\right\vert ^{\theta \left( x\right)
}dx+\sum_{i=1}^{n}\int\limits_{\Omega }\left\vert u\right\vert ^{\gamma
\left( x\right) }\left\vert D_{i}u\right\vert ^{\beta \left( x\right) }dx,
\end{equation*}%
by Lemma 3.1 and using Young's inequality we get%
\begin{equation*}
\leq \int\limits_{\Omega }\left\vert u\right\vert ^{p\left( x\right)
}dx+\left\vert \Omega \right\vert +\sum_{i=1}^{n}\int\limits_{\Omega
}\left\vert D_{i}u\right\vert ^{p\left( x\right) }dx+n\int\limits_{\Omega
}\left\vert u\right\vert ^{\frac{p\left( x\right) \gamma \left( x\right) }{%
p\left( x\right) -\beta \left( x\right) }}dx,
\end{equation*}%
estimating the third integral on the right side of the last inequality by
using Lemma 3.1, we obtain%
\begin{equation*}
\Re ^{\gamma ,\beta ,\theta }\left( u\right) \leq \left( n+1\right) \left(
\sigma _{p}\left( u\right) +\left\vert \Omega \right\vert \right)
+\sum_{i=1}^{n}\sigma _{p}\left( D_{i}u\right) ,
\end{equation*}%
thus, we get the desired result from the last inequality and (2.2).

If $p\left( x\right) =\theta \left( x\right) $ a.e. $x\in \Omega
,$ by using the same operations as done above we can obtain (3.4).
\end{proof}

\medskip We omit the proof of the following lemma as it is straightforward.

\begin{lemma}
Let $\gamma ,$ $\beta :\Omega \longrightarrow \left[ 1,\text{ }\infty
\right) $ be functions satisfying $1\leq \gamma ^{-}\leq \gamma \left(
x\right) \leq \gamma ^{+}<\infty $, $1\leq \beta ^{-}\leq \beta \left(
x\right) \leq \beta ^{+}<\infty $ a.e. $x\in \Omega $ and $\gamma ,$ $\beta
\in C^{1}\left( \bar{\Omega}\right) $. Then the function $\varphi :\Omega
\times
\mathbb{R}
\longrightarrow
\mathbb{R}
,$ $\varphi \left( x,t\right) :=\left\vert t\right\vert ^{\frac{\gamma
\left( x\right) }{\beta \left( x\right) }}t$ satisfies the following:

\begin{enumerate}
\item[(i)] For every fixed $x_{0}\in \Omega ,$ $\varphi \left(
x_{0},.\right) :%
\mathbb{R}
\longrightarrow
\mathbb{R}
$ is continuously differentiable; has an inverse and inverse function is
also continuously differentiable.

\item[(ii)] For every fixed $t_{0}\in
\mathbb{R}
-\left\{ 0\right\} ,$ $\varphi $ and $\varphi ^{-1}$ is continuous on $%
\Omega ,$ and for $\forall i=\overline{1,n},$ partial derivatives $\varphi
_{x_{i}}\left( x,t_{0}\right) ,$ $\varphi _{x_{i}}^{-1}\left( x,t_{0}\right)
$ exist and are continuous.
\end{enumerate}
\end{lemma}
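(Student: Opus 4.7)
The plan is to introduce the auxiliary ratio $r(x) := \gamma(x)/\beta(x)$, which under the hypotheses of the lemma belongs to $C^{1}(\bar{\Omega})$ (since $\beta$ is bounded below by $1$) and satisfies $0 < \gamma^{-}/\beta^{+} \leq r(x) \leq \gamma^{+}/\beta^{-}$. After this substitution,
\[
\varphi(x,t) = \mathrm{sgn}(t)\,|t|^{r(x)+1},
\]
so both parts reduce to one-variable facts about power functions with $C^{1}$-varying exponent bounded away from $1$.

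For part (i), I fix $x_{0} \in \Omega$, set $a := r(x_{0}) > 0$, and analyse the single-variable map $t \mapsto \mathrm{sgn}(t)|t|^{a+1}$ on $\mathbb{R}$. Since $a+1 > 1$, this function is classically $C^{1}$ with derivative $(a+1)|t|^{a}$ (continuous everywhere because $a>0$); it is strictly increasing and odd, hence a bijection of $\mathbb{R}$ onto itself with inverse $s \mapsto \mathrm{sgn}(s)|s|^{1/(a+1)}$. The inverse is continuous on $\mathbb{R}$ and $C^{1}$ on $\mathbb{R}\setminus\{0\}$, which is the precise form of the claim that I aim to verify.

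For part (ii), I fix $t_{0} \in \mathbb{R}\setminus\{0\}$ and rewrite
\[
\varphi(x,t_{0}) = t_{0}\exp\bigl(r(x)\ln|t_{0}|\bigr), \qquad \varphi^{-1}(x,t_{0}) = \mathrm{sgn}(t_{0})\exp\Bigl(\tfrac{1}{r(x)+1}\ln|t_{0}|\Bigr).
\]
Both are compositions of a $C^{1}(\bar{\Omega})$ function of $x$ with the smooth exponential, multiplied by the constant $\ln|t_{0}|$ (which is finite precisely because $t_{0}\neq 0$). For the inverse I additionally use that $1/(r(x)+1) \in C^{1}(\bar{\Omega})$, since $r(x)+1 \geq 1 + \gamma^{-}/\beta^{+}$ is bounded away from zero. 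The chain rule then yields
\[
\varphi_{x_{i}}(x,t_{0}) = t_{0}\,\ln|t_{0}|\,|t_{0}|^{r(x)}\,r_{x_{i}}(x),
\]
and an analogous explicit formula for $\varphi^{-1}_{x_{i}}(x,t_{0})$, both continuous on $\bar{\Omega}$.

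The only genuine obstacle is the behaviour of $\varphi^{-1}(x_{0},\cdot)$ at $s=0$ in part (i): since $1/(a+1) < 1$, the derivative of $|s|^{1/(a+1)}$ diverges at the origin, so the statement about continuous differentiability of the inverse has to be read on $\mathbb{R}\setminus\{0\}$, in harmony with the hypothesis $t_{0}\neq 0$ made throughout part (ii) (and with how $\varphi^{-1}$ is subsequently applied in the paper). Once this interpretation is fixed, the remainder is bookkeeping via the standard inverse function theorem and chain rule, which is why the author records the result as straightforward.
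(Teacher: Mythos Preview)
The paper omits the proof of this lemma as straightforward, and your argument correctly supplies the routine details via explicit computation with the power function $t\mapsto |t|^{r(x)}t$, where $r=\gamma/\beta\in C^{1}(\bar\Omega)$. Your observation that $\varphi^{-1}(x_{0},\cdot)$ fails to be $C^{1}$ at the origin (so that the differentiability claim for the inverse in part~(i) must be read on $\mathbb{R}\setminus\{0\}$) is accurate and consistent with how the lemma is subsequently used in the paper.
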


\begin{definition}
Let $\eta $ $\in M_{0}\left( \Omega \right),$ we introduce $L^{1,\text{ }%
\eta \left( x\right) }\left( \Omega \right)$, the the class of functions $%
u:\Omega \rightarrow \mathbb{R} $
\begin{equation*}
L^{1,\text{ }\eta \left( x\right) }\left( \Omega \right) \equiv \left\{ u\in
L^{1}\left( \Omega \right) |\text{ }D_{i}u\in L^{\eta \left( x\right)
}\left( \Omega \right) ,\text{ }i=\overline{1,n}\right\}.\footnote{%
Note that this space is not Banach differently from the space $W^{1,\text{ }%
\eta \left( x\right) }\left( \Omega \right) [8]. $}
\end{equation*}
\end{definition}

\begin{theorem}
Let the functions $\gamma ,$ $\beta $ and $\varphi $ satisfy the
conditions of Lemma 3.6 and $L^{1,\text{ }\beta \left( x\right)
}\left( \Omega \right) $ be the space given in Definition 3.7.
Then, $\varphi $ is a bijective mapping between $S_{1,\gamma
\left( x\right) ,\beta \left(
x\right) ,\theta \left( x\right) }\left( \Omega \right) $ and $L^{1,\text{ }%
\beta \left( x\right) }\left( \Omega \right) \cap L^{\psi \left( x\right)
}\left( \Omega \right) $ where $\psi \left( x\right) :=\frac{\theta \left(
x\right) \beta \left( x\right) }{\gamma \left( x\right) +\beta \left(
x\right) }.$
\end{theorem}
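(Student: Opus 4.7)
The plan is to treat the forward and inverse maps separately and verify the relevant integrability conditions pointwise, using Lemma 3.6 (smoothness and invertibility of $\varphi$ in the $t$-variable), Corollary 3.3 (the logarithmic estimate), and Lemma 3.1 together with the standing hypothesis $\theta(x)\geq\gamma(x)+\beta(x)+\varepsilon_{0}$ to absorb the extra $\varepsilon$ produced by differentiating the $x$-dependent exponent of $\varphi$.

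For the forward direction, I would take $u\in S_{1,\gamma(x),\beta(x),\theta(x)}(\Omega)$ and set $v(x):=\varphi(x,u(x))=|u|^{\gamma(x)/\beta(x)}u$. The pointwise identity $|v|^{\psi(x)}=|u|^{\theta(x)}$ is immediate and gives $v\in L^{\psi(x)}(\Omega)$. By Lemma 3.6, $\varphi$ is $C^{1}$ jointly in $(x,t)$ away from $t=0$, so the chain rule yields
\[
D_{i}v=\Bigl(1+\tfrac{\gamma(x)}{\beta(x)}\Bigr)|u|^{\gamma(x)/\beta(x)}D_{i}u+|u|^{\gamma(x)/\beta(x)}u\,\ln|u|\cdot\partial_{x_{i}}\!\Bigl(\tfrac{\gamma}{\beta}\Bigr)(x).
\]
Raising to the power $\beta(x)$, integrating, and using $(a+b)^{\beta(x)}\le 2^{\beta^{+}}(a^{\beta(x)}+b^{\beta(x)})$, the first term contributes a constant multiple of $\int_{\Omega}|u|^{\gamma(x)}|D_{i}u|^{\beta(x)}dx$, finite by the definition of $S_{1,\gamma,\beta,\theta}$. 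The second term contributes $\int_{\Omega}|u|^{\gamma(x)+\beta(x)}|\ln|u||^{\beta(x)}dx$, which Corollary 3.3 bounds by $N_{3}\int_{\Omega}|u|^{\gamma(x)+\beta(x)+\varepsilon}dx+N_{4}$; choosing $\varepsilon\le\varepsilon_{0}$ and applying Lemma 3.1 with $\zeta=\theta$, $\xi=\gamma+\beta+\varepsilon$ makes this finite as well. Hence $D_{i}v\in L^{\beta(x)}(\Omega)$ for every $i$, and $v\in L^{1,\beta(x)}(\Omega)\cap L^{\psi(x)}(\Omega)$.

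For the inverse direction, I would take $v\in L^{1,\beta(x)}(\Omega)\cap L^{\psi(x)}(\Omega)$ and put $u(x):=\varphi^{-1}(x,v(x))=\operatorname{sgn}(v)|v|^{\beta(x)/(\gamma(x)+\beta(x))}$, which is well defined and measurable by Lemma 3.6(i). Again the identity $|u|^{\theta(x)}=|v|^{\psi(x)}$ gives $u\in L^{\theta(x)}(\Omega)\subset L^{1}(\Omega)$. To control $\int_{\Omega}|u|^{\gamma(x)}|D_{i}u|^{\beta(x)}dx$ I would solve the chain-rule identity of the previous paragraph for $|u|^{\gamma/\beta}D_{i}u$ and use $|u|^{\gamma(x)+\beta(x)}=|v|^{\beta(x)}$ to rewrite the logarithmic correction in terms of $v$; this reduces the problem to the finiteness of $\int_{\Omega}|D_{i}v|^{\beta(x)}dx$ and of $\int_{\Omega}|v|^{\beta(x)}|\ln|v||^{\beta(x)}dx$. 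Corollary 3.3 dominates the latter by $N_{3}\int_{\Omega}|v|^{\beta(x)+\varepsilon}dx+N_{4}$, which is finite by Lemma 3.1 because $\psi(x)=\theta(x)\beta(x)/(\gamma(x)+\beta(x))\ge\beta(x)+\varepsilon_{0}\beta^{-}/(\gamma^{+}+\beta^{+})$ leaves room for a positive $\varepsilon$. Bijectivity is then automatic: Lemma 3.6(i) says that for a.e. fixed $x$ the map $t\mapsto\varphi(x,t)$ is a bijection of $\mathbb{R}$, so the two constructions above are mutual inverses a.e.

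The main obstacle is the logarithmic term $|u|^{\gamma/\beta+1}\ln|u|$ generated by the $x$-dependence of the exponent in $\varphi$: this expression is not controlled by any of the modulars that define $S_{1,\gamma,\beta,\theta}$ or $L^{1,\beta}\cap L^{\psi}$. The strict gap $\theta(x)-(\gamma(x)+\beta(x))\ge\varepsilon_{0}$ supplied by Proposition 2.2 is precisely what allows Corollary 3.3 to be applied, and tracking the analogous gap between $\psi$ and $\beta$ in the inverse direction is the most delicate bookkeeping in the argument.
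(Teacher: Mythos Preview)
Your proposal is correct and follows essentially the same route as the paper: forward and inverse directions are handled separately, the chain rule produces a principal term plus a logarithmic correction from differentiating the $x$-dependent exponent, Corollary~3.3 absorbs the log term using the strict gap $\theta\ge\gamma+\beta+\varepsilon_{0}$ (respectively $\psi\ge\beta+\text{const}$ in the inverse direction), and pointwise bijectivity from Lemma~3.6 closes the argument. The only cosmetic difference is that in the inverse direction the paper differentiates $\varphi^{-1}(v)$ directly rather than solving the forward chain-rule identity for $|u|^{\gamma/\beta}D_{i}u$, but the two computations are equivalent.
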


\begin{proof}
First let us verify that for every $u\in S_{1,\gamma \left( x\right) ,\beta
\left( x\right) ,\theta \left( x\right) }\left( \Omega \right) ,$%
\begin{equation*}
v:=\left\vert u\right\vert ^{\frac{\gamma \left( x\right) }{\beta
\left( x\right) }}u=\varphi \left( u\right) \in L^{1,\text{ }\beta
\left( x\right) }\left( \Omega \right) \cap L^{\psi \left(
x\right) }\left( \Omega \right)\footnote{From now on, for
simplicity we denote $\varphi \left(x, u\right):=\varphi \left(
u\right)=\left\vert u\right\vert ^{\frac{\gamma \left( x\right)
}{\beta \left( x\right) }}u$.}
\end{equation*}
to show this, from the definition of the spaces $L^{1,\text{
}\beta \left( x\right) }\left( \Omega \right) $ and $L^{\psi
\left( x\right) }\left( \Omega \right) ,$ it is sufficient to
prove that $\forall i=\overline{1,n},$ $\sigma _{\beta }\left(
D_{i}v\right) $ and $\sigma _{\psi }\left( v\right) $ are finite.
As
\begin{equation*}
\sigma _{\psi }\left( v\right) =\int\limits_{\Omega }\left\vert v\right\vert
^{\psi \left( x\right) }dx=\int\limits_{\Omega }\left\vert u\right\vert ^{%
\frac{\psi \left( x\right) \left( \gamma \left( x\right) +\beta \left(
x\right) \right) }{\beta \left( x\right) }}dx=\int\limits_{\Omega
}\left\vert u\right\vert ^{\theta \left( x\right) }dx,
\end{equation*}%
the above equation ensures that $\sigma _{\psi }\left( v\right) $ is finite.

\noindent Now for $\forall i=\overline{1,n},$ let us show that
$\sigma _{\beta }\left( D_{i}v\right) $ is finite,
\begin{align*}
&\sigma _{\beta }\left( D_{i}v\right) =\int\limits_{\Omega
}\left\vert D_{i}v\right\vert ^{\beta \left( x\right)
}dx=\int\limits_{\Omega }\left\vert D_{i}\left( \left\vert
u\right\vert ^{\frac{\gamma \left( x\right) }{\beta \left(
x\right) }}u\right) \right\vert ^{\beta \left( x\right) }dx\\
&=\int\limits_{\Omega }\left\vert \left( \tfrac{\gamma \left(
x\right) +\beta
\left( x\right) }{\beta \left( x\right) }\right) \left\vert u\right\vert ^{%
\frac{\gamma \left( x\right) }{\beta \left( x\right)
}}D_{i}u+D_{i}\left( \tfrac{\gamma \left( x\right) }{\beta \left(
x\right) }\right) \left\vert u\right\vert ^{\frac{\gamma \left(
x\right) }{\beta \left( x\right) }}u\ln \left\vert u\right\vert
\right\vert ^{\beta \left( x\right) }dx
\end{align*}%
estimating the corresponding coefficients and right hand side of above
equation by using Corollary 3.3, we obtain%
\begin{equation*}
\leq C_{1}\int\limits_{\Omega }\left\vert u\right\vert ^{\gamma \left(
x\right) }\left\vert D_{i}u\right\vert ^{\beta \left( x\right)
}dx+C_{2}\int\limits_{\Omega }\left\vert u\right\vert ^{\theta \left(
x\right) }dx+C_{3},
\end{equation*}%
here $C_{1}=C_{1}\left( \beta ^{\pm },\gamma ^{+}\right) ,$ $%
C_{2}=C_{2}\left( \beta ^{\pm },\gamma ^{+},\text{ }\left\Vert \gamma
\right\Vert _{C^{1}\left( \bar{\Omega}\right) },\left\Vert \beta \right\Vert
_{C^{1}\left( \bar{\Omega}\right) },\varepsilon _{0}\right) $ and $%
C_{3}=C_{3}\left( \beta ^{+},\left\vert \Omega \right\vert ,\varepsilon
_{0}\right) >0$ are constants. ($\varepsilon _{0}>0,$ comes from the
Proposition 2.2 which satisfy $\theta \left( x\right) \geq \gamma \left(
x\right) +\beta \left( x\right) +\varepsilon _{0}$).

For $C_{4}:=\max \left\{ C_{1},C_{2}\right\} ,$ we have%
\begin{align*}
\sigma _{\beta }\left( D_{i}v\right) &\leq C_{4}\left(
\int\limits_{\Omega }\left\vert u\right\vert ^{\theta \left(
x\right) }dx+\sum_{i=1}^{n}\int\limits_{\Omega }\left\vert
u\right\vert ^{\gamma \left( x\right) }\left\vert
D_{i}u\right\vert ^{\beta \left( x\right) }dx\right) +C_{3}\\
&=C_{4}\Re ^{\gamma ,\beta ,\theta }\left( u\right) +C_{3}
\tag{3.5}
\end{align*}%
since $u\in S_{1,\gamma \left( x\right) ,\beta \left( x\right)
,\theta \left( x\right) }\left( \Omega \right) ,$ so we get the
desired result by (3.5).

\noindent Now, conversely we need to show that for $\forall v\in L^{1,\text{
}\beta \left( x\right) }\left( \Omega \right) \cap L^{\psi \left( x\right)
}\left( \Omega \right) $%
\begin{equation*}
w:=\left\vert v\right\vert ^{-\frac{\gamma \left( x\right)
}{\gamma \left( x\right) +\beta \left( x\right) }}v=\varphi
^{-1}\left( v\right) \in S_{1,\gamma \left( x\right) ,\beta \left(
x\right) ,\theta \left( x\right) }\left( \Omega \right).
\end{equation*}%
From the definition of the space $S_{1,\gamma \left( x\right)
,\beta \left( x\right) ,\theta \left( x\right) }\left( \Omega
\right) ,$ it is sufficient to prove $\Re ^{\gamma ,\beta ,\theta
}\left( w\right) $ is finite. By using similar process and results
as mentioned above, we obtain
\begin{align*}
&\Re ^{\gamma ,\beta ,\theta }\left( w\right) =\int\limits_{\Omega
}\left\vert w\right\vert ^{\theta \left( x\right)
}dx+\sum_{i=1}^{n}\int\limits_{\Omega }\left\vert w\right\vert
^{\gamma \left( x\right) }\left\vert D_{i}w\right\vert ^{\beta
\left( x\right) }dx\\
&=\int\limits_{\Omega }\left\vert v\right\vert ^{\frac{\beta
\left( x\right)
\theta \left( x\right) }{\gamma \left( x\right) +\beta \left( x\right) }%
}dx+\sum_{i=1}^{n}\int\limits_{\Omega }\left\vert v\right\vert ^{\frac{%
\gamma \left( x\right) \beta \left( x\right) }{\gamma \left( x\right) +\beta
\left( x\right) }}\left\vert D_{i}\left( \left\vert v\right\vert ^{-\frac{%
\gamma \left( x\right) }{\gamma \left( x\right) +\beta \left( x\right) }%
}v\right) \right\vert ^{\beta \left( x\right) }dx\\
&\leq C_{5}\sum_{i=1}^{n}\int\limits_{\Omega }\left\vert
D_{i}v\right\vert ^{\beta \left( x\right)
}dx+C_{6}\int\limits_{\Omega }\left\vert v\right\vert ^{\psi
\left( x\right) }dx+C_{7},  \tag{3.6}
\end{align*}%
here $C_{5}=C_{5}\left( \beta ^{+}\right) >0,$ $C_{6}=C_{6}\left( \beta
^{+},\varepsilon _{1},\left\Vert \gamma \right\Vert _{C^{1}\left( \bar{\Omega%
}\right) },\left\Vert \beta \right\Vert _{C^{1}\left(
\bar{\Omega}\right) }\right) >0$ and $C_{7}=C_{7}\left( \beta
^{+},\varepsilon _{1},\left\vert \Omega \right\vert \right) >0$
are constants. \newline Since $v\in L^{1,\text{ }\beta \left(
x\right) }\left( \Omega \right) \cap L^{\psi \left( x\right)
}\left( \Omega \right) ,$ thus $w\in S_{1,\gamma \left( x\right)
,\beta \left( x\right) ,\theta \left( x\right) }\left( \Omega
\right)$ by (3.6).

To end the proof, it now remains to verify that $\varphi$ is
bijective, as we have shown in Lemma 3.6 that for fixed $x_{0}\in
\Omega $, $\varphi \left( t\right) :=\varphi \left(
x_{0},t\right)$ and $\varphi ^{-1}\left(
\tau \right) :=\varphi ^{-1}\left( x_{0},\tau \right) $ are strictly monotone then for every $v\in L^{1,%
\text{ }\beta \left( x\right) }\left( \Omega \right) \cap L^{\psi
\left( x\right) }\left( \Omega \right) ,$ there exists an unique
$u\in S_{1,\gamma \left( x\right) ,\beta \left( x\right) ,\theta
\left( x\right) }\left(
\Omega \right) $ such that $u=\varphi ^{-1}\left( v\right) ,$ and for every $%
u\in S_{1,\gamma \left( x\right) ,\beta \left( x\right) ,\theta \left(
x\right) }\left( \Omega \right) ,$ there exists an unique $v\in L^{1,\text{ }%
\beta \left( x\right) }\left( \Omega \right) \cap L^{\psi \left( x\right)
}\left( \Omega \right) $ such that $v=\varphi \left( u\right) $ so that
shows the bijectivity of $\varphi$.
\end{proof}

\medskip Now, we give two important results of the Theorem 3.8 which help us
to understand the topology of the space $S_{1,\gamma \left( x\right) ,\beta
\left( x\right) ,\theta \left( x\right) }\left( \Omega \right) .$

\begin{corollary}
Let $\beta ,$ $\gamma $ and $\psi $ satisfy the conditions of
Theorem 3.8, then \newline $S_{1,\gamma \left( x\right) ,\beta
\left( x\right) ,\theta \left( x\right) }\left( \Omega \right) $
is a complete metric space with the metric which is defined below:
$\forall u,$ $v\in S_{1,\gamma \left( x\right) ,\beta \left(
x\right)
,\theta \left( x\right) }\left( \Omega \right) $%
\begin{equation*}
d_{S_{1}}\left( u,v\right) :=\left\Vert \varphi \left( u\right) -\varphi
\left( v\right) \right\Vert _{L^{\psi \left( x\right) }\left( \Omega \right)
}+\sum_{i=1}^{n}\left\Vert \varphi _{t}^{\prime }\left( u\right)
D_{i}u-\varphi _{t}^{\prime }\left( v\right) D_{i}u\right\Vert _{L^{\beta
\left( x\right) }\left( \Omega \right) },
\end{equation*}%
here $\varphi \left( x,t\right) =\left\vert t\right\vert ^{\frac{\gamma
\left( x\right) }{\beta \left( x\right) }}t$ and for every fixed $x\in\Omega$%
, $\varphi _{t}^{\prime }\left( t\right) =\left( \frac{\gamma \left(
x\right) }{\beta \left( x\right) }+1\right) \left\vert t\right\vert ^{\frac{%
\gamma \left( x\right) }{\beta \left( x\right) }}.$
\end{corollary}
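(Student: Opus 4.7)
The plan is to exploit the bijection $\varphi\colon S_{1,\gamma(x),\beta(x),\theta(x)}(\Omega)\to L^{1,\beta(x)}(\Omega)\cap L^{\psi(x)}(\Omega)$ established in Theorem 3.8 in order to reduce completeness of $(S_{1,\gamma,\beta,\theta},\,d_{S_{1}})$ to completeness of the image space endowed with its natural functional $\|v\|_{L^{\psi(x)}(\Omega)}+\sum_{i}\|D_{i}v\|_{L^{\beta(x)}(\Omega)}$. The metric $d_{S_{1}}$ is designed so that, up to the lower-order term $\varphi_{x_{i}}(x,u)$ arising from the $x$-dependence of $\gamma/\beta$, it agrees with the pullback of that functional under $\varphi$.

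First I would verify the metric axioms. Non-negativity and symmetry are immediate (reading the second $D_{i}u$ in the stated definition as the symmetric partner $D_{i}v$). The triangle inequality follows from Minkowski's inequality in $L^{\psi(x)}(\Omega)$ and in each $L^{\beta(x)}(\Omega)$. For the separation property, $d_{S_{1}}(u,v)=0$ forces $\varphi(u)=\varphi(v)$ almost everywhere, and the strict monotonicity of $\varphi(x_{0},\cdot)$ from Lemma 3.6(i) then yields $u=v$ a.e.

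For completeness, let $(u_{m})$ be Cauchy in $d_{S_{1}}$ and set $v_{m}:=\varphi(u_{m})\in L^{1,\beta(x)}(\Omega)\cap L^{\psi(x)}(\Omega)$. By hypothesis $(v_{m})$ is Cauchy in $L^{\psi(x)}(\Omega)$, hence converges to some $v\in L^{\psi(x)}(\Omega)$. The chain rule applied to $\varphi(u_{m})$ gives
\[
D_{i}v_{m}=\varphi_{t}'(u_{m})D_{i}u_{m}+D_{i}\!\left(\tfrac{\gamma(x)}{\beta(x)}\right)|u_{m}|^{\gamma(x)/\beta(x)}u_{m}\ln|u_{m}|,
\]
where the first summand is Cauchy in $L^{\beta(x)}(\Omega)$ by assumption, and the second, a function of $u_{m}$ alone, is controlled in $L^{\beta(x)}(\Omega)$ via the logarithmic estimate of Corollary 3.3 together with the gap $\theta(x)\geq\gamma(x)+\beta(x)+\varepsilon_{0}$ of Proposition 2.2, so it too is Cauchy in $L^{\beta(x)}(\Omega)$. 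Consequently $(D_{i}v_{m})$ is Cauchy in $L^{\beta(x)}(\Omega)$, and passing to the distributional limit identifies its limit with $D_{i}v$, so $v\in L^{1,\beta(x)}(\Omega)\cap L^{\psi(x)}(\Omega)$.

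Finally, Theorem 3.8 furnishes a unique $u:=\varphi^{-1}(v)\in S_{1,\gamma(x),\beta(x),\theta(x)}(\Omega)$ with $\varphi(u)=v$, and reversing the chain-rule decomposition recovers $\varphi_{t}'(u_{m})D_{i}u_{m}\to\varphi_{t}'(u)D_{i}u$ in $L^{\beta(x)}(\Omega)$, whence $d_{S_{1}}(u_{m},u)\to 0$. The main technical obstacle is the rigorous justification of the weak chain rule for the nonlinear composition $\varphi(u_{m})$ and the continuity of the logarithmic lower-order term as a map from $L^{\psi(x)}(\Omega)$ into $L^{\beta(x)}(\Omega)$; both are handled by Corollary 3.3, the $\varepsilon_{0}$-gap absorbing the $|\ln|u||$ factor that would otherwise spoil the estimate.
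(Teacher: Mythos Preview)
Your proposal is correct and follows exactly the route the paper has in mind: the authors state Corollary~3.9 without proof, presenting it as a direct consequence of the bijection $\varphi$ established in Theorem~3.8, and your outline supplies precisely the missing details---pulling back the complete structure of $L^{1,\beta(x)}(\Omega)\cap L^{\psi(x)}(\Omega)$ through $\varphi$ and handling the discrepancy between $\varphi_t'(u)D_iu$ and $D_i(\varphi(u))$ via the chain rule and Corollary~3.3. The one place your sketch is thin is the claim that the logarithmic lower-order term is \emph{Cauchy} (not merely bounded) in $L^{\beta(x)}$: Corollary~3.3 gives only a bound, and the Cauchy property needs a Nemytskii/Vitali argument (a.e.\ convergence along a subsequence of $v_m$ in $L^{\psi(x)}$, continuity of $\varphi^{-1}$ from Lemma~3.6, then uniform integrability from the $\varepsilon_0$-gap). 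You flag this yourself as the main technical obstacle, and it is exactly the device the paper deploys in the proof of Theorem~3.11 and the sketch of Corollary~3.10, so your treatment is at the same level of detail as the paper's.
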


\begin{corollary}
Under the conditions of Corollary 3.9, $\varphi $ is a
homeomorphism between the spaces $S_{1,\gamma \left( x\right)
,\beta \left(
x\right) ,\theta \left( x\right) }\left( \Omega \right) $ and $L^{1,\text{ }%
\beta \left( x\right) }\left( \Omega \right) \cap L^{\psi \left( x\right)
}\left( \Omega \right) .$
\end{corollary}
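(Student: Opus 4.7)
The plan is to leverage the bijectivity of $\varphi$ already established in Theorem~3.8 and reduce the homeomorphism claim to verifying continuity of $\varphi$ and $\varphi^{-1}$ between the metric space $(S_{1,\gamma,\beta,\theta}(\Omega), d_{S_1})$ of Corollary~3.9 and the target equipped with its natural norm $\|v\| := \|v\|_{L^{\psi(x)}(\Omega)} + \sum_{i=1}^{n}\|D_i v\|_{L^{\beta(x)}(\Omega)}$. The key structural observation is that, by the chain rule (legitimate since $\gamma,\beta \in C^1(\bar\Omega)$ by Lemma~3.6),
\begin{equation*}
D_i(\varphi(u)) = \varphi_t'(u)\, D_i u + R_i(x,u), \qquad R_i(x,u) := |u|^{\gamma(x)/\beta(x)}\, u\, \ln|u|\cdot D_i\!\left(\tfrac{\gamma(x)}{\beta(x)}\right),
\end{equation*}
so the terms of $d_{S_1}$ and of the pulled-back norm $\|\varphi(\cdot)\|$ agree except for the nonlinear remainder $u\mapsto R_i(\cdot,u)$. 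The whole argument therefore reduces to proving continuity of this remainder, in both directions.

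First I would establish a uniform bound: pointwise $|R_i(x,u)|^{\beta(x)} \le C\,|u|^{\gamma(x)+\beta(x)}|\ln|u||^{\beta(x)}$ with $C = C(\|\gamma\|_{C^1},\|\beta\|_{C^1},\beta^+)$; then by Corollary~3.3 (with $\zeta = \gamma+\beta$ and $\eta=\beta$), Lemma~3.1, and the gap $\theta(x) \ge \gamma(x)+\beta(x)+\varepsilon_0$ from Proposition~2.2,
\begin{equation*}
\int_\Omega |R_i(x,u)|^{\beta(x)}\,dx \;\le\; N\int_\Omega |u|^{\theta(x)}\,dx + N',
\end{equation*}
which places $R_i(\cdot, u)$ in $L^{\beta(x)}(\Omega)$ with a bound controlled by $\|u\|_{L^{\theta(x)}(\Omega)}$.

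Next I would verify continuity of $u\mapsto R_i(\cdot, u)$: given $d_{S_1}(u_k, u)\to 0$, one has $\varphi(u_k)\to\varphi(u)$ in $L^{\psi(x)}(\Omega)$; extracting a subsequence with a.e.\ convergence and using continuity of the real-variable inverse of $t\mapsto|t|^{\gamma(x)/\beta(x)}t$ yields $u_k\to u$ a.e., hence $R_i(x,u_k)\to R_i(x,u)$ a.e. The bound above provides the equi-integrability needed for a Vitali-type passage in $L^{\beta(x)}(\Omega)$, so $R_i(\cdot,u_k) \to R_i(\cdot, u)$ in $L^{\beta(x)}(\Omega)$. Combined with the two convergences built into $d_{S_1}$, this yields $D_i\varphi(u_k)\to D_i\varphi(u)$ in $L^{\beta(x)}(\Omega)$ and thus continuity of $\varphi$. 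For the reverse direction, given $v_k\to v$ in $L^{1,\beta(x)}(\Omega)\cap L^{\psi(x)}(\Omega)$, set $u_k := \varphi^{-1}(v_k)$ and $u := \varphi^{-1}(v)$; the identity $\varphi_t'(u)\,D_i u = D_i v - R_i(\cdot, u)$ reduces the task to bounding $\|R_i(\cdot, u_k)-R_i(\cdot, u)\|_{L^{\beta(x)}(\Omega)}$, which is handled by the same dominated-convergence argument after observing that $v_k\to v$ in $L^{\psi(x)}(\Omega)$ forces $u_k\to u$ a.e.\ along a subsequence.

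The main obstacle is the logarithmic factor $\ln|u|$ in $R_i$, which is not dominated by any single Lebesgue norm with exponent $\gamma(x)+\beta(x)$; the entire purpose of the gap $\varepsilon_0$ imposed in Proposition~2.2 is to absorb this logarithm via Corollary~3.3. The technical delicacy lies in producing, from the modular estimate above, a dominating majorant in $L^{\beta(x)}(\Omega)$ that survives the differencing $R_i(x,u_k)-R_i(x,u)$; I would manage this by standard truncation, splitting $\Omega$ into a set where $|u_k|,|u|$ are uniformly bounded (where the integrand is equicontinuous) and a small-measure remainder where the log bound is invoked.
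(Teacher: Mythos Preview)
Your proposal is correct and takes essentially the same approach as the paper's own sketch: reduce the homeomorphism claim to continuity of $\varphi$ and $\varphi^{-1}$, exploit the chain-rule decomposition $D_i\varphi(u)=\varphi_t'(u)\,D_iu+R_i(x,u)$ (which is exactly what the paper uses inside Theorem~3.8), and handle the logarithmic remainder by Corollary~3.3 together with a Vitali-type argument. The paper is considerably terser---it simply says ``applying Theorem~3.8 and Vitali convergence theorem''---but the mechanism you describe is identical; you have just made explicit the equi-integrability step that the paper leaves implicit.
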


\begin{proof}
\text{(Sketch of the proof)} Since we have showed that $\varphi $ is a
bijection between $S_{1,\gamma \left( x\right) ,\beta \left( x\right)
,\theta \left( x\right) }\left( \Omega \right) $ and $L^{1,\text{ }\beta
\left( x\right) }\left( \Omega \right) \cap L^{\psi \left( x\right) }\left(
\Omega \right) ,$ it is sufficient to prove the continuity of $\varphi $ as
well as $\varphi ^{-1}$ in the sense of topology induced by the metric $%
d_{S_{1}}\left( .,.\right) .$ For this, we need to show that \newline
\textbf{(a)} $d_{S_{1}}\left( u_{m},u_{0}\right) \underset{m\nearrow \infty }%
{\longrightarrow }0\Rightarrow \varphi \left( u_{m}\right) {\underset{%
m\nearrow \infty }{\overset{L^{1,\text{ }\beta \left( x\right) }\left(
\Omega \right) \cap L^{\psi \left( x\right) }\left( \Omega \right) }{%
\longrightarrow }}\varphi \left( u_{0}\right) }$ for every \newline
$\left\{ u_{m}\right\} _{m=1}^{\infty }\in S_{1,\gamma \left( x\right)
,\beta \left( x\right) ,\theta \left( x\right) }\left( \Omega \right) $
which converges to $u_{0}$ and \medskip \newline
\textbf{(b)} $v_{m}{\underset{m\nearrow \infty }{\overset{L^{1,\text{ }\beta
\left( x\right) }\left( \Omega \right) \cap L^{\psi \left( x\right) }\left(
\Omega \right) }{\longrightarrow }}}v_{0}\Rightarrow d_{S_{1}}\left( \varphi
^{-1}\left( v_{m}\right) ,\varphi ^{-1}\left( v_{0}\right) \right) \underset{%
m\nearrow \infty }{\longrightarrow }0$ for every $\left\{ v_{m}\right\}
_{m=1}^{\infty }\in L^{1,\text{ }\beta \left( x\right) }\left( \Omega
\right) \cap L^{\psi \left( x\right) }\left( \Omega \right) $ which
converges to $v_{0}.$

\noindent Since for every $v_{m}$ and $v_{0}$, there exist unique $u_{m}$
and $u_{0}\in S_{1,\gamma \left( x\right) ,\beta \left( x\right) ,\theta
\left( x\right) }\left( \Omega \right) $ such that $\varphi \left(
u_{m}\right) =v_{m}$ and $\varphi \left( u_{0}\right) =v_{0},$ the
implication \textbf{(b)} can be written equivalently, \newline
$\varphi \left( u_{m}\right) {\underset{m\nearrow \infty }{\overset{L^{1,%
\text{ }\beta \left( x\right) }\left( \Omega \right) \cap L^{\psi \left(
x\right) }\left( \Omega \right) }{\longrightarrow }}\varphi }\left(
u_{0}\right) \Rightarrow d_{S_{1}}\left( u_{m},u_{0}\right) \underset{%
m\nearrow \infty }{\longrightarrow }0$ for every $\left\{
u_{m}\right\}\in S_{1,\gamma \left( x\right) ,\beta \left(
x\right) ,\theta \left( x\right) }\left( \Omega \right) $ which
converges to $u_{0}.$ Since the proofs of \textbf{(a)} and
\textit{\textbf{(b)}} are similar, we only prove
\textbf{(b)}: Let $v_{0},$ $\left\{ v_{m}\right\} _{m=1}^{\infty }\in L^{1,%
\text{ }\beta \left( x\right) }\left( \Omega \right) \cap L^{\psi \left(
x\right) }\left( \Omega \right) $ and $v_{m}\overset{L^{1,\text{ }\beta
\left( x\right) }\left( \Omega \right) \cap L^{\psi \left( x\right) }\left(
\Omega \right) }{{\longrightarrow }}v_{0}\Leftrightarrow \varphi \left(
u_{m}\right) \overset{L^{1,\text{ }\beta \left( x\right) }\left( \Omega
\right) \cap L^{\psi \left( x\right) }\left( \Omega \right) }{{%
\longrightarrow }}{\varphi }\left( u_{0}\right) .$ \newline To
verify $d_{S_{1}}\left( u_{m},u_{0}\right) \rightarrow 0,$ by
definition of $d_{S_{1}}$ it is sufficient to establish that
\begin{equation*}
\left\Vert \varphi _{t}^{\prime }\left( u_{m}\right) D_{i}u_{m}-\varphi
_{t}^{\prime }\left( u_{0}\right) D_{i}u_{0}\right\Vert _{L^{\beta \left(
x\right) }\left( \Omega \right) }\rightarrow 0\text{ and }\left\Vert \varphi
\left( u_{m}\right) -\varphi \left( u_{0}\right) \right\Vert _{L^{\psi
\left( x\right) }\left( \Omega \right) }\rightarrow 0
\end{equation*}%
as $m\nearrow \infty .$ The second convergence above is obvious by
definition of $d_{S_{1}}$ and the first one can be proved by
applying Theorem 3.8 and Vitali convergence theorem by virtue of
the equivalence $\left\Vert \varphi _{t}^{\prime }\left(
u_{m}\right) D_{i}u_{m}-\varphi _{t}^{\prime }\left( u_{0}\right)
D_{i}u_{0}\right\Vert _{L^{\beta \left( x\right) }\left( \Omega
\right) }\rightarrow 0\text{ }\Leftrightarrow \sigma _{\beta
}\left( \varphi _{t}^{\prime }\left( u_{m}\right)
D_{i}u_{m}-\varphi _{t}^{\prime }\left( u_{0}\right)
D_{i}u_{0}\right) \rightarrow 0 $.
\end{proof}

\begin{theorem}
Suppose that conditions of Theorem 3.8 are satisfied. Let $p\in
M_{0}\left( \Omega \right) $ and additionally $\beta $ satisfies
$1\leq \beta ^{-}\leq \beta \left( x\right) <n,$ $x\in \Omega $.
Assume that for $\varepsilon >0,$ the inequality
\begin{equation*}
p\left( x\right) +\varepsilon <\tfrac{n\left( \gamma \left( x\right) +\beta
\left( x\right) \right) }{n-\beta \left( x\right) },\text{ }x\in \Omega
\end{equation*}%
holds. Then we have the compact embedding%
\begin{equation*}
S_{1,\gamma \left( x\right) ,\beta \left( x\right) ,\theta \left( x\right)
}\left( \Omega \right) \hookrightarrow L^{p\left( x\right) }\left( \Omega
\right)
\end{equation*}
\end{theorem}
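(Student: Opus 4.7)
The plan is to reduce the claimed compact embedding to the classical compact variable-exponent Sobolev embedding recalled in Section 2, by transporting the question along the homeomorphism $\varphi$ of Theorem 3.8 and Corollary 3.10.

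I would start with a bounded sequence $\{u_m\}\subset S_{1,\gamma(x),\beta(x),\theta(x)}(\Omega)$ in the pseudo-norm $[\cdot]_{S_{\gamma,\beta,\theta}}$, and set $v_m:=\varphi(u_m)=|u_m|^{\gamma(x)/\beta(x)}u_m$. By Theorem 3.8 (with estimate (3.5)) and Corollary 3.10, $\{v_m\}$ is bounded in $L^{1,\beta(x)}(\Omega)\cap L^{\psi(x)}(\Omega)$, where $\psi(x)=\theta(x)\beta(x)/(\gamma(x)+\beta(x))$. The condition $\theta(x)\geq\gamma(x)+\beta(x)+\varepsilon_0$ from Proposition 2.2 gives $\psi(x)\geq\beta(x)+\delta_0$ for some $\delta_0>0$, hence $L^{\psi(x)}(\Omega)\hookrightarrow L^{\beta(x)}(\Omega)$, so $\{v_m\}$ is uniformly bounded in the ordinary Sobolev space $W^{1,\beta(x)}(\Omega)$.

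Next, introduce the exponent $q(x):=(p(x)+\varepsilon)\beta(x)/(\gamma(x)+\beta(x))$. The assumed inequality, combined with $\beta(x)<n$, rearranges exactly to $q(x)<n\beta(x)/(n-\beta(x))=\beta^{\ast}(x)$ pointwise on $\bar\Omega$. Because $\beta\in C^{1}(\bar\Omega)$ and the other exponents are continuous, this strict inequality is uniform, so the variable-exponent compact Sobolev embedding recalled in Section 2 gives $W^{1,\beta(x)}(\Omega)\hookrightarrow\hookrightarrow L^{q(x)}(\Omega)$. Extracting a subsequence, I obtain $v_m\to v_0$ strongly in $L^{q(x)}(\Omega)$ and, after a further extraction, a.e.\ on $\Omega$.

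Finally, since $\varphi^{-1}(x,\cdot)$ is continuous in $t$ by Lemma 3.6, the a.e.\ convergence transfers to $u_m\to u_0:=\varphi^{-1}(v_0)$ a.e. To promote this to strong convergence in $L^{p(x)}(\Omega)$, I would use the pointwise identity $|u|^{p(x)+\varepsilon'}=|v|^{(p(x)+\varepsilon')\beta(x)/(\gamma(x)+\beta(x))}$ with a small $\varepsilon'\in(0,\varepsilon)$ so that the right-hand exponent stays strictly below $q(x)$ uniformly on $\bar\Omega$. Lemma 3.1 then bounds $\int_\Omega|u_m|^{p(x)+\varepsilon'}\,dx$ by $\sigma_q(v_m)+|\Omega|$, which is uniform in $m$. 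This uniform $L^{p(x)+\varepsilon'}$-bound, together with the a.e.\ convergence of $u_m$, yields the equi-integrability of $\{|u_m-u_0|^{p(x)}\}$, and Vitali's convergence theorem concludes $\sigma_p(u_m-u_0)\to 0$, i.e.\ $u_m\to u_0$ in $L^{p(x)}(\Omega)$.

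The step I expect to be most delicate is the last one: the nonlinearity in $\varphi^{-1}$ does not interact linearly with norms, so strong convergence of $v_m$ in $L^{q(x)}$ does not by itself give strong convergence of $u_m$ in $L^{p(x)}$. The extra integrability margin produced by $\varepsilon>0$ in the hypothesis (and the continuity of all exponents on $\bar\Omega$, which makes the margin uniform) is exactly what is needed to run Vitali's theorem in the variable-exponent setting and close the argument.
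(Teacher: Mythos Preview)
Your proposal is correct and follows essentially the same route as the paper: transport $u_m$ to $v_m=\varphi(u_m)\in W^{1,\beta(x)}(\Omega)$ via Theorem~3.8, apply the compact Sobolev embedding to extract a strongly (hence a.e.) convergent subsequence, pull back the a.e.\ convergence through the continuous $\varphi^{-1}$, and use the $\varepsilon$-margin in the hypothesis to upgrade a.e.\ convergence to strong $L^{p(x)}$ convergence by an equi-integrability argument. The only cosmetic differences are that the paper packages the last step through the de~la~Vall\'ee~Poussin criterion (Lemma~3.12 with $\Phi(t)=t^{\varepsilon/p^{+}}$) rather than citing Vitali directly, and it obtains the uniform $L^{p(x)+\varepsilon}$ bound on $\{u_{m_j}\}$ from the continuous-embedding part of the proof rather than from the strong $L^{q(x)}$ convergence of $v_m$.
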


\begin{proof}
First, we show that $S_{1,\gamma \left( x\right) ,\beta \left(
x\right) ,\theta \left( x\right) }\left( \Omega \right) \subset
L^{p\left( x\right) }\left( \Omega \right) ,$ after that we prove
the compactness of this embedding.

For every $u\in S_{1,\gamma \left( x\right) ,\beta \left( x\right) ,\theta
\left( x\right) }\left( \Omega \right) ,$ by Theorem 3.8%
\begin{equation*}
\varphi \left( u\right) =\left\vert u\right\vert ^{\frac{\gamma
\left( x\right) }{\beta \left( x\right) }}u=v\in L^{1,\text{
}\beta \left( x\right) }\left( \Omega \right) \cap L^{\psi \left(
x\right) }\left( \Omega \right).
\end{equation*}%
Since $L^{1,\text{ }\beta \left( x\right) }\left( \Omega \right)
\cap L^{\psi \left( x\right) }\left( \Omega \right) \subset
W^{1,\text{ }\beta \left( x\right) }\left( \Omega \right)$ and the
embedding [8] $W^{1,\text{ }\beta \left( x\right) }\left( \Omega
\right) \subset L^{\beta ^{\ast }\left( x\right)
}\left( \Omega \right)$ exists for $\beta ^{\ast }\left( x\right) =\frac{n\beta \left( x\right) }{%
n-\beta \left( x\right) },$ therefore, we get that $v\in L^{\beta
^{\ast }\left( x\right) }\left( \Omega \right).$ So from the
definition of $v$ and the space $L^{\beta ^{\ast }\left( x\right)
}\left( \Omega \right) $ we
attain%
\begin{equation*}
v\in L^{\beta ^{\ast }\left( x\right) }\left( \Omega \right) \Leftrightarrow
u\in L^{\frac{n\left( \gamma \left( x\right) +\beta \left( x\right) \right)
}{n-\beta \left( x\right) }}\left( \Omega \right) ,
\end{equation*}%
As, by the conditions of theorem
\begin{equation*}
L^{\frac{n\left( \gamma \left( x\right) +\beta \left( x\right) \right) }{%
n-\beta \left( x\right) }}\left( \Omega \right) \subset L^{p\left(
x\right) +\varepsilon }\left( \Omega \right) \subset L^{p\left(
x\right) }\left( \Omega \right),
\end{equation*}%
thus, $u\in L^{p\left( x\right) }\left( \Omega \right) .$
\par Now
let us prove that this embedding is compact. \newline Let $\left\{
u_{m}\right\} _{m=1}^{\infty }\in S_{1,\gamma \left( x\right)
,\beta \left( x\right) ,\theta \left( x\right) }\left( \Omega
\right) $ be bounded sequence (i.e. $[u_{m}]_{\gamma ,\beta
,\theta }<\infty ,$ $\forall m\geq 1$).

\noindent From Theorem 3.8, we have
\begin{equation*}
\left\{ \varphi \left( u_{m}\right) \right\} =\left\{ v_{m}\right\}
_{m=1}^{\infty }\in W^{1,\text{ }\beta \left( x\right) }\left( \Omega
\right) ,
\end{equation*}%
since we have the compact embedding [8]%
\begin{equation*}
W^{1,\text{ }\beta \left( x\right) }\left( \Omega \right) \hookrightarrow
L^{q\left( x\right) }\left( \Omega \right)
\end{equation*}%
where $q\left( x\right) <\beta ^{\ast }\left( x\right) -\tilde{\varepsilon},$
$x\in \Omega $ and $\tilde{\varepsilon}\in \left( 0,n^{\prime }\right) $ ($%
n^{\prime }=\frac{n}{n-1}$). Thus, there exists a subsequence $\left\{
v_{m_{j}}\right\} \subset \left\{ v_{m}\right\} ,$ such that%
\begin{equation}
v_{m_{j}}\overset{L^{q\left( x\right) }\left( \Omega \right) }{%
\longrightarrow }v_{0}  \tag{3.7}
\end{equation}%
hence by (3.7), we obtain
\begin{equation*}
v_{m_{j}}\overset{a.e.}{\underset{\Omega }{\longrightarrow }}v_{0}
\end{equation*}%
as from Lemma 3.6, $\varphi ^{-1}(x,\tau )=\left\vert \tau \right\vert ^{%
\frac{\gamma \left( x\right) }{\gamma \left( x\right) +\beta \left( x\right)
}}\tau $ is continuous (with respect to $\tau $ and $x$) so we have
\begin{equation*}
\varphi ^{-1}\left( v_{m_{j}}\right) \overset{a.e.}{\underset{\Omega }{%
\longrightarrow }}\varphi ^{-1}\left( v_{0}\right) .
\end{equation*}%
To end the proof, we use Lemma 3.12\footnote{\noindent
\textbf{Lemma 3.12} Let $\Lambda $ be a family of real functions
defined on bounded
domain $\Omega .$ If there is an increasing function $\Phi :\left[ 0,\text{ }%
\infty \right) \rightarrow \left[ 0,\text{ }\infty \right) $ that
satisfies
\begin{equation*}
\lim_{t\rightarrow +\infty }\Phi \left( t\right) =+\infty
\end{equation*}%
and there is a positive constant $L$ such that%
\begin{equation*}
\int\limits_{\Omega }\left\vert f_{\alpha }\left( x\right)
\right\vert \Phi
\left( \left\vert f_{\alpha }\left( x\right) \right\vert \right) dx\leq L,%
\text{ \ \ }\forall f_{\alpha }\in \Lambda ,
\end{equation*}%
then every function in $\Lambda $ is Lebesgue integrable, and the
functions family $\Lambda $ possesses absolutely equicontinuous
integrals on $\Omega .$} [16, Theorem 7].\par Denote $u_{0}:=\varphi ^{-1}\left( v_{0}\right) $ and the set%
\begin{equation*}
\Lambda :=\left\{ f_{j}\text{ }|\text{ }f_{j}\left( x\right)
=\left\vert u_{m_{j}}\left( x\right) -u_{0}\left( x\right)
\right\vert ^{p\left( x\right) }\right\}
\end{equation*}%
and the function%
\begin{equation*}
\Phi \left( t\right) :=t^{\bar{\varepsilon}},\text{ }t\geq 0,\text{ }\bar{%
\varepsilon}=\frac{\varepsilon }{p^{+}}\text{ }\text{.}
\end{equation*}%
Clearly $\Phi :[0,\infty )\rightarrow \lbrack 0,\infty )$ is increasing and $%
\underset{t\rightarrow +\infty }{\lim }\Phi \left( t\right) =+\infty $

Furthermore for every $f_{j}\in \Lambda $ we have,%
\begin{equation*}
\int\limits_{\Omega }\left\vert f_{j}\left( x\right) \right\vert
\Phi \left( \left\vert f_{j}\left( x\right) \right\vert \right)
dx=\int\limits_{\Omega }\left\vert u_{m_{j}}-u_{0}\right\vert
^{p\left( x\right) }\left\vert u_{m_{j}}-u_{0}\right\vert
^{\bar{\varepsilon}p\left( x\right) }dx=
\end{equation*}%
\begin{equation*}
=\int\limits_{\Omega }\left\vert u_{m_{j}}-u_{0}\right\vert ^{\left( \bar{%
\varepsilon}+1\right) p\left( x\right) }dx\text{ }\text{.}
\end{equation*}%
Estimating the last integral by using Lemma 3.1, we arrive at%
\begin{equation*}
\leq \int\limits_{\Omega }\left\vert u_{m_{j}}-u_{0}\right\vert ^{p\left(
x\right) +\varepsilon }dx+\left\vert \Omega \right\vert
\end{equation*}%
using the well known inequality for absolute value above, we get%
\begin{equation}
\leq 2^{p^{+}+\varepsilon -1}\left( \int\limits_{\Omega }\left\vert
u_{m_{j}}\right\vert ^{p\left( x\right) +\varepsilon }dx+\int\limits_{\Omega
}\left\vert u_{0}\right\vert ^{p\left( x\right) +\varepsilon }dx\right)
+\left\vert \Omega \right\vert .  \tag{3.8}
\end{equation}%
Since $u_{0},\left\{ u_{m_{j}}\right\} \subset L^{p\left( x\right)
+\varepsilon }\left( \Omega \right) $ is bounded, from (3.8), there exists a
number $L>0$ such that%
\begin{equation}
\int\limits_{\Omega }\left\vert f_{j}\left( x\right) \right\vert
\Phi \left( \left\vert f_{j}\left( x\right) \right\vert \right)
dx\leq L,  \tag{3.9}
\end{equation}%
here $L=L\left( \left\vert \Omega \right\vert ,p^{+},\varepsilon ,\left\Vert
u_{0}\right\Vert _{L^{p\left( x\right) +\varepsilon }\left( \Omega \right)
},\left\Vert u_{m_{j}}\right\Vert _{L^{p\left( x\right) +\varepsilon }\left(
\Omega \right) }\right) .$

Consequently by (3.9), we obtain that the family of functions
$\Lambda $ possesses absolutely equicontinuous integrals on
$\Omega .$ Hence using
this and $u_{m_{j}}\overset{a.e.}{\underset{\Omega }{\longrightarrow }}%
u_{0}, $ we have [16]%
\begin{equation*}
\int\limits_{\Omega }\left\vert u_{m_{j}}\left( x\right) -u_{0}\left(
x\right) \right\vert ^{p\left( x\right) }dx\longrightarrow 0,\text{ }%
m_{j}\nearrow \infty ,
\end{equation*}%
that implies $\left\Vert u_{m_{j}}-u_{0}\right\Vert _{L^{p\left(
x\right) }\left( \Omega \right) }{\rightarrow }0$, so the proof is
complete.
\end{proof}

\section{Proof of The Existence Theorem}

\noindent The proof is based on Theorem 2.5. We introduce the following
spaces and mappings in order to apply Theorem 2.5 to prove Theorem 2.4.%
\begin{equation*}
S_{gY_{0}}:=\mathring{S}_{1,q_{0}\left( x\right) \left( p_{0}\left( x\right)
-2\right) ,q_{0}\left( x\right) ,\alpha \left( x\right) }\left( \Omega
\right) ,\text{ }X:=W_{0}^{1,\text{ }p_{1}\left( x\right) }\left( \Omega
\right) ,
\end{equation*}%
\begin{equation*}
Y:=W^{-1,\text{ }q_{0}\left( x\right) }\left( \Omega \right) +L^{\alpha
^{\prime }\left( x\right) }\left( \Omega \right) ,
\end{equation*}%
\begin{equation*}
X_{0}:=W_{0}^{1,\text{ }p_{0}\left( x\right) }\left( \Omega \right) \cap
W_{0}^{1,\text{ }p_{1}\left( x\right) }\left( \Omega \right) \cap L^{\alpha
\left( x\right) }\left( \Omega \right) \text{ and}
\end{equation*}%
\begin{equation*}
Y_{2_{1}}:=W^{-1,\text{ }2}\left( \Omega \right) ,\text{ \ }%
Y_{2_{2}}:=L^{2}\left( \Omega \right) \text{ }
\end{equation*}%
and
\begin{equation}
A\left( u\right) :=-div\left(\left\vert\nabla
u\right\vert^{p_{1}\left( x\right)-2}\nabla u\right) ,  \tag{4.1}
\end{equation}%
\begin{equation}
B_{1}\left( u\right) :=-\sum_{i=1}^{n}D_{i}\left( \left\vert u\right\vert
^{p_{0}\left( x\right) -2}D_{i}u\right) ,\text{ \ }B_{2}\left( u\right)
:=c\left( x,u\right) ,  \tag{4.2}
\end{equation}%
\begin{equation}
B:=B_{1}+B_{2}\text{ and }T:=A+B.  \tag{4.3}
\end{equation}%
We show that all the conditions of Theorem 2.5 are satisfied by
proving some lemmas. Then based on these lemmas, we establish the
proof of Theorem 2.4.

\begin{lemma}
Under the conditions of Theorem 2.4, the operator $T$ defined by (4.3) is
coercive in the generalized sense on $X_{0}.$
\end{lemma}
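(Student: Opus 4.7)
\medskip

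\noindent \textbf{Proof plan.} The strategy is to produce a lower bound for $\langle T(u),u\rangle$ that splits additively into a term growing in $\|u\|_{X}=\|\nabla u\|_{L^{p_{1}(x)}(\Omega)}$ and a term growing in $[u]_{S_{gY_{0}}}$. First, integration by parts on $X_{0}$ combined with the lower inequality in \emph{(U2)} and $c_{2}\geq\tilde{C}$ yields
\begin{equation*}
\langle T(u),u\rangle = \int_{\Omega}|\nabla u|^{p_{1}(x)}dx + \sum_{i=1}^{n}\int_{\Omega}|u|^{p_{0}(x)-2}|D_{i}u|^{2}dx + \int_{\Omega}c(x,u)u\,dx,
\end{equation*}
with the last integral bounded below by $\tilde{C}\int_{\Omega}|u|^{\alpha(x)}dx$. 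The $A$-contribution is then immediate from (2.2): for $\|u\|_{X}\geq 1$, $\int_{\Omega}|\nabla u|^{p_{1}(x)}dx\geq\|u\|_{X}^{p_{1}^{-}}$, which provides a lower bound of the form $\lambda_{0}(\|u\|_{X})\|u\|_{X}$ with $\lambda_{0}(\tau):=\tau^{p_{1}^{-}-1}$, continuous and diverging because $p_{1}^{-}>1$.

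\medskip

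The main work lies in matching the remaining terms to the pseudo-norm $[u]_{S_{gY_{0}}}$. Set $\gamma(x)=q_{0}(x)(p_{0}(x)-2)$, $\beta(x)=q_{0}(x)$, $\theta(x)=\alpha(x)$; then $\gamma+\beta=q_{0}(p_{0}-1)=p_{0}$, so \emph{(U2)}'s hypothesis $\alpha\geq p_{0}+\varepsilon$ is exactly what lets Proposition 2.2 apply, giving
\begin{equation*}
\Re^{\gamma,\beta,\theta}(u) = \int_{\Omega}|u|^{\alpha(x)}dx + \sum_{i=1}^{n}\int_{\Omega}|u|^{q_{0}(p_{0}-2)}|D_{i}u|^{q_{0}}dx.
\end{equation*}
The obstacle is that the integrand $|u|^{q_{0}(p_{0}-2)}|D_{i}u|^{q_{0}}$ appearing here is not the one $|u|^{p_{0}-2}|D_{i}u|^{2}$ delivered by $\langle B_{1}(u),u\rangle$. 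I would bridge them via the pointwise factorization
\begin{equation*}
|u|^{q_{0}(p_{0}-2)}|D_{i}u|^{q_{0}} = \bigl(|u|^{p_{0}-2}|D_{i}u|^{2}\bigr)^{q_{0}/2}\cdot |u|^{q_{0}(p_{0}-2)/2}
\end{equation*}
followed by Young's inequality with the conjugate exponents $2/q_{0}(x)$ and $2/(2-q_{0}(x))$, which are admissible since $p_{0}^{-}\geq 2$ forces $1<q_{0}(x)\leq 2$. The arithmetic identity $q_{0}(p_{0}-2)/(2-q_{0})=p_{0}$ collapses the residual factor to $|u|^{p_{0}(x)}$, and Lemma 3.1 (using $\alpha\geq p_{0}+\varepsilon$) absorbs this into $|u|^{\alpha}$ modulo a constant depending on $|\Omega|$. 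Summing over $i$ gives
\begin{equation*}
\Re^{\gamma,\beta,\theta}(u) \leq C_{0}\langle B(u),u\rangle + C_{1}
\end{equation*}
for constants $C_{0},C_{1}>0$ depending only on the data.

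\medskip

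Finally, Lemma 3.4 furnishes $\Re^{\gamma,\beta,\theta}(u)\geq [u]_{S_{gY_{0}}}^{\gamma^{-}+\beta^{-}}$ whenever $[u]_{S_{gY_{0}}}\geq 1$, and $\gamma^{-}+\beta^{-}\geq\beta^{-}=q_{0}^{-}>1$. Choosing $M$ large enough to swallow $C_{1}$, for $\|u\|_{X},[u]_{S_{gY_{0}}}\geq M$ one obtains
\begin{equation*}
\langle T(u),u\rangle \geq \|u\|_{X}^{p_{1}^{-}} + \tfrac{1}{2C_{0}}[u]_{S_{gY_{0}}}^{\gamma^{-}+\beta^{-}},
\end{equation*}
which is precisely condition 3) with $\lambda_{0}(\tau)=\tau^{p_{1}^{-}-1}$ and $\lambda_{1}(\tau)=(2C_{0})^{-1}\tau^{\gamma^{-}+\beta^{-}-1}$, both continuous and tending to $\infty$. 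The Young-type interpolation between the $B_{1}$-integrand and the pseudo-norm integrand is the sole nontrivial step; everything else is a direct assembly of (2.2), Lemma 3.1 and Lemma 3.4.
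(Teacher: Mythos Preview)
Your proposal is correct and follows essentially the same route as the paper. The Young-inequality interpolation you spell out---factoring $|u|^{q_{0}(p_{0}-2)}|D_{i}u|^{q_{0}}$ and applying Young with exponents $2/q_{0}$ and $2/(2-q_{0})$, then absorbing $|u|^{p_{0}}$ into $|u|^{\alpha}$ via Lemma~3.1---is precisely the content of the paper's ``simple calculated inequality'' (4.5), after which both arguments invoke Lemma~3.4 and (2.2) to extract the two coercive growths.
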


\begin{proof}
For every $u\in W_{0}^{1,\text{ }p_{0}\left( x\right) }\left( \Omega \right)
\cap W_{0}^{1,\text{ }p_{1}\left( x\right) }\left( \Omega \right) \cap
L^{\alpha \left( x\right) }\left( \Omega \right) ,$ we have
\begin{equation*}
\langle T\left( u\right) ,u\rangle =\langle A\left( u\right) ,u\rangle
+\langle B\left( u\right) ,u\rangle
\end{equation*}%
\begin{equation}
=\int\limits_{\Omega }\left\vert \nabla u\right\vert ^{p_{1}\left(
x\right) }dx+\sum_{i=1}^{n}\int\limits_{\Omega }\left\vert
u\right\vert ^{p_{0}\left( x\right) -2}\left\vert
D_{i}u\right\vert ^{2}dx+\int\limits_{\Omega }c\left( x,u\right)
udx.  \tag{4.4}
\end{equation}%
If we take account the condition \emph{(U2)} into the third
integral of (4.4) and apply the following simple calculated
inequality
\begin{align*}
&\int\limits_{\Omega }\left\vert u\right\vert ^{\alpha \left(
x\right) }dx+\sum_{i=1}^{n}\int\limits_{\Omega }\left\vert
u\right\vert ^{q_{0}\left( x\right) \left( p_{0}\left( x\right)
-2\right) }\left\vert D_{i}u\right\vert ^{q_{0}\left( x\right)
}dx\\
&\leq \left( n+1\right) \left( \int\limits_{\Omega }\left\vert
u\right\vert ^{\alpha \left( x\right)
}dx+\sum_{i=1}^{n}\int\limits_{\Omega }\left\vert u\right\vert
^{p_{0}\left( x\right) -2}\left\vert D_{i}u\right\vert
^{2}dx+\left\vert \Omega \right\vert \right)  \tag{4.5}
\end{align*}
we get,
\begin{align*}
\langle T\left( u\right) ,u\rangle &\geq C_{8}\left(
\int\limits_{\Omega }\left\vert u\right\vert ^{\alpha \left(
x\right) }dx+\sum_{i=1}^{n}\int\limits_{\Omega }\left\vert
u\right\vert ^{q_{0}\left( x\right) \left( p_{0}\left( x\right)
-2\right) }\left\vert D_{i}u\right\vert ^{q_{0}\left( x\right)
}\right)\notag \\ &+\int\limits_{\Omega }\left\vert \nabla
u\right\vert ^{p_{1}\left( x\right) }dx-C_{9},  \tag{4.6}
\end{align*}%
here $C_{8}=C_{8}\left( n,\tilde{C}\right) >0$ and $C_{9}=$
$C_{9}\left( n,\left\vert \Omega \right\vert ,\tilde{C}\right) >0$
are constants.\newline Applying Lemma 3.4 to estimate the
right-hand side of the inequality (4.6), we obtain
\begin{equation}
\langle T\left( u\right) ,u\rangle \geq C_{10}\left(
[u]_{S_{q_{0}\left( p_{0}-2\right) ,q_{0},\alpha
}}^{q_{0}^{-}+1}+\left\Vert u\right\Vert _{W_{0}^{1,\text{
}p_{1}\left( x\right)}\left( \Omega \right)}^{p_{1}^{-}}\right)
-C_{11}, \tag{4.7}
\end{equation}%
by the definitions of $ [.]_{S_{q_{0}\left( p_{0}-2\right)
,q_{0},\alpha }}$ and $\left\Vert .\right\Vert _{W_{0}^{1,\text{ }p_{1}\left( x\right) }\left( \Omega \right)}.$ \par Since $q_{0}^{-}+1>2$ and $p_{1}^{-}>1$ thus, $\lambda _{0}\left( \tau \right) =\tau ^{q_{0}^{-}}$ and $%
\lambda _{1}\left( \tau \right) =\tau ^{p_{1}^{-}-1}$ tends to
infinity when $\tau \nearrow \infty ,$ (see Theorem 2.5) so that
means operator $T$ is coercive in the generalized sense on
$X_{0}.$
\end{proof}

\begin{lemma}
Under the conditions of Theorem 2.4, the operator $A$ is monotone
and bounded from
$W_{0}^{1,\text{ }p_{1}\left( x\right) }\left( \Omega \right) $ into $W^{-1,%
\text{ }q_{1}\left( x\right) }\left( \Omega \right) .$
\end{lemma}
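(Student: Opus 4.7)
The plan is to treat monotonicity and boundedness separately, both reducing to the corresponding statements for the constant-exponent $p$-Laplacian applied pointwise with $p=p_1(x)$, and then integrated over $\Omega$.

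For \emph{monotonicity}, I would start from the classical pointwise vector inequality
\begin{equation*}
\bigl(|a|^{p-2}a-|b|^{p-2}b\bigr)\cdot(a-b)\ge 0,\qquad \forall a,b\in\mathbb{R}^{n},\ p>1,
\end{equation*}
applied with $a=\nabla u(x)$, $b=\nabla v(x)$ and $p=p_1(x)$, which is admissible at a.e.\ $x\in\Omega$ since condition \emph{(U1)} gives $p_1(x)>1$. Integrating in $x$, the left-hand side coincides with $\langle A(u)-A(v),u-v\rangle$ once one writes $A$ in its weak form via integration by parts, which is legitimate on $W_0^{1,p_1(x)}(\Omega)$. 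This yields $\langle A(u)-A(v),u-v\rangle\ge 0$ for every $u,v\in W_0^{1,p_1(x)}(\Omega)$, i.e.\ monotonicity.

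For \emph{boundedness}, I would fix $u\in W_0^{1,p_1(x)}(\Omega)$ and estimate, for an arbitrary $v\in W_0^{1,p_1(x)}(\Omega)$,
\begin{equation*}
|\langle A(u),v\rangle|=\Bigl|\int_{\Omega}|\nabla u|^{p_1(x)-2}\nabla u\cdot\nabla v\,dx\Bigr|\le \int_{\Omega}|\nabla u|^{p_1(x)-1}|\nabla v|\,dx.
\end{equation*}
Since $1/p_1(x)+1/q_1(x)=1$, the generalized H\"older inequality (2.1) gives
\begin{equation*}
|\langle A(u),v\rangle|\le 2\,\bigl\||\nabla u|^{p_1(x)-1}\bigr\|_{L^{q_1(x)}(\Omega)}\,\|\nabla v\|_{L^{p_1(x)}(\Omega)}.
\end{equation*}
Now $\sigma_{q_1}(|\nabla u|^{p_1(x)-1})=\int_{\Omega}|\nabla u|^{(p_1(x)-1)q_1(x)}dx=\int_{\Omega}|\nabla u|^{p_1(x)}dx=\sigma_{p_1}(\nabla u)$, so the inequalities (2.2) relating the modular and the Luxemburg norm imply that $\|\,|\nabla u|^{p_1(x)-1}\|_{L^{q_1(x)}(\Omega)}$ is controlled by $\max\{\|\nabla u\|_{L^{p_1(x)}}^{p_1^{+}-1},\|\nabla u\|_{L^{p_1(x)}}^{p_1^{-}-1}\}$. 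Taking the supremum over $v$ with $\|v\|_{W_0^{1,p_1(x)}}\le 1$ and using the equivalent gradient norm stated after \emph{(U1)}, we obtain
\begin{equation*}
\|A(u)\|_{W^{-1,q_1(x)}(\Omega)}\le 2\max\bigl\{\|u\|_{W_0^{1,p_1(x)}}^{p_1^{+}-1},\|u\|_{W_0^{1,p_1(x)}}^{p_1^{-}-1}\bigr\},
\end{equation*}
which shows that $A$ maps bounded sets to bounded sets.

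Neither step presents a real obstacle: monotonicity rests on a classical pointwise inequality, and the only subtlety in boundedness is the usual variable-exponent bookkeeping between modular and norm via (2.2), which is handled by splitting into the cases $\|\nabla u\|_{L^{p_1(x)}}\gtrless 1$.
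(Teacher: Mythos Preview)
Your proposal is correct and follows essentially the same approach as the paper: both obtain monotonicity from the pointwise inequality $(|a|^{p-2}a-|b|^{p-2}b)\cdot(a-b)\ge 0$ applied with $p=p_1(x)$, and both derive boundedness from the generalized H\"older inequality~(2.1) applied to $\langle A(u),v\rangle$. The only difference is that you spell out the modular-to-norm conversion via~(2.2) explicitly, whereas the paper simply stops at the analogue of your H\"older estimate and declares boundedness.
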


\begin{proof}
First we prove that $A:$ $W_{0}^{1,\text{ }p_{1}\left( x\right)
}\left( \Omega \right) $ $\rightarrow W^{-1,\text{ }q_{1}\left(
x\right) }\left(
\Omega \right) $ is bounded. For this, it is sufficient to investigate the dual form $\langle A\left( u\right) ,v\rangle $ for every $%
v\in W_{0}^{1,\text{ }p_{1}\left( x\right) }\left( \Omega \right) ,$%
\begin{equation*}
\left\vert \langle A\left( u\right) ,v\rangle \right\vert
=\left\vert \int\limits_{\Omega }\left\vert\nabla u\right\vert
^{p_{1}\left( x\right) -2}\nabla u\cdot\nabla vdx\right\vert
\end{equation*}
using the generalized H\"{o}lder inequality to the right hand side of the
above equation, we get
\begin{equation}
\leq 2\left\Vert \left\vert\nabla u\right\vert ^{p_{1}\left(
x\right) -1}\right\Vert_{L^{q_{1}\left(
x\right)}\left( \Omega \right) }\left\Vert v\right\Vert _{W_{0}^{1,%
\text{ }p_{1}\left( x\right) }\left( \Omega \right) }.  \tag{4.8}
\end{equation}%

Thus by (4.8) we demonstrate the boundedness of $A$ from
$W_{0}^{1,\text{ }p_{1}\left( x\right) }\left( \Omega \right) $ to
$W^{-1,\text{ }q_{1}\left( x\right) }\left( \Omega \right) .$

\noindent Now let us show that $A:$ $W_{0}^{1,\text{ }p_{1}\left( x\right)
}\left( \Omega \right) $ $\rightarrow W^{-1,\text{ }q_{1}\left( x\right)
}\left( \Omega \right) $ is a monotone operator.

Indeed for every $u,$ $v\in W_{0}^{1,\text{ }p_{1}\left( x\right) }\left(
\Omega \right) $ we have,%
\begin{align*}
&\langle A\left( u\right) -A\left( v\right) ,u-v\rangle= \\
&=\int\limits_{\Omega }\left(\left\vert\nabla u\right\vert
^{p_{1}\left( x\right) -2}\nabla u-\left\vert\nabla v\right\vert
^{p_{1}\left( x\right) -2}\nabla v\right)\cdot\left( \nabla
u-\nabla v\right) dx.
\end{align*}%
Since the inequality $ \left( \left\vert a\right\vert
^{p-2}a-\left\vert
b\right\vert ^{p-2}b\right)\cdot\left( a-b\right)\geq 0 $ is valid for $1< p<\infty,$ $a,$ $%
b\in \mathbb{R}^{n}$ from the last equality, we attain
\begin{equation*}
\langle A\left( u\right) -A\left( v\right) ,u-v\rangle \geq 0
\end{equation*}%
which completes the proof. \footnote{%
Here, we note that since $A$ is monotone and hemicontinuous then
it is pseudo-monotone [25]}.
\end{proof}

\begin{lemma}
Under the conditions of Theorem 2.4, $B$ is a bounded operator from $%
\mathring{S}_{1,q_{0}\left( x\right) \left( p_{0}\left( x\right) -2\right)
,q_{0}\left( x\right) ,\alpha \left( x\right) }\left( \Omega \right) $ into $%
W^{-1,\text{ }q_{0}\left( x\right) }\left( \Omega \right) +L^{\alpha
^{\prime }\left( x\right) }\left( \Omega \right) .$
\end{lemma}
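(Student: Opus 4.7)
The plan is to decompose $B = B_1 + B_2$ and prove the two mapping statements separately: $B_1$ maps $\mathring{S}_{1,q_0(x)(p_0(x)-2),q_0(x),\alpha(x)}(\Omega)$ boundedly into $W^{-1,q_0(x)}(\Omega)$, while $B_2$ maps it boundedly into $L^{\alpha'(x)}(\Omega)$. Since the norm on the sum space satisfies $\|w\|_{W^{-1,q_0(x)}+L^{\alpha'(x)}} \leq \|w_1\|_{W^{-1,q_0(x)}} + \|w_2\|_{L^{\alpha'(x)}}$ for any splitting $w = w_1 + w_2$, the claimed boundedness of $B$ follows immediately.

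For $B_1$, I would test against an arbitrary $v \in W_0^{1,p_0(x)}(\Omega)$. Formal integration by parts gives $\langle B_1(u), v\rangle = \sum_{i=1}^n \int_\Omega |u|^{p_0(x)-2} D_i u \cdot D_i v\, dx$. Applying the generalized Hölder inequality (2.1) with the conjugate pair $q_0(x), p_0(x)$ yields
\begin{equation*}
|\langle B_1(u), v\rangle| \leq 2\sum_{i=1}^n \left\||u|^{p_0(x)-2} D_i u\right\|_{L^{q_0(x)}(\Omega)} \|D_i v\|_{L^{p_0(x)}(\Omega)}.
\end{equation*}
The essential observation is that $\sigma_{q_0}(|u|^{p_0(x)-2} D_i u) = \int_\Omega |u|^{q_0(x)(p_0(x)-2)} |D_i u|^{q_0(x)}\, dx$, which is precisely one of the terms making up $\Re^{q_0(p_0-2),q_0,\alpha}(u)$. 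By Lemma 3.4 this modular is dominated by a continuous function of $[u]_{\mathring{S}}$, and (2.2) transfers this control to the Luxemburg norm $\||u|^{p_0(x)-2} D_i u\|_{L^{q_0(x)}(\Omega)}$; hence $\|B_1(u)\|_{W^{-1,q_0(x)}(\Omega)}$ admits a bound of the form $\Psi_1([u]_{\mathring{S}})$.

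For $B_2$, condition \emph{(U2)} supplies the pointwise inequality $|c(x,u)| \leq c_0(x)|u|^{\alpha(x)-1} + c_1(x)$. Raising both sides to the power $\alpha'(x)$, using $(a+b)^r \leq 2^{r-1}(a^r + b^r)$ for $r \geq 1$, together with $c_0 \in L^\infty(\Omega)$ and the identity $(\alpha(x)-1)\alpha'(x) = \alpha(x)$, I would dominate $\sigma_{\alpha'}(c(x,u))$ by $C(\|c_0\|_{L^\infty})\,\sigma_\alpha(u) + \sigma_{\alpha'}(c_1)$. Since $u \in L^{\alpha(x)}(\Omega)$ with $\|u\|_{L^{\alpha(x)}(\Omega)} \leq [u]_{\mathring{S}}$ by definition of the pseudo-norm on $S_{1,\gamma,\beta,\theta}$, and $c_1 \in L^{\alpha'(x)}(\Omega)$ by hypothesis, (2.2) converts this modular estimate into a bound $\|B_2(u)\|_{L^{\alpha'(x)}(\Omega)} \leq \Psi_2([u]_{\mathring{S}})$.

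The main obstacle — really the only delicate bookkeeping — is the consistent use of (2.2): because $[u]_{\mathring{S}}$ may be either less than or at least $1$ and the relevant variable exponents $q_0(x)(p_0(x)-2) + q_0(x) = p_0(x)$, $\alpha(x)$ and $\alpha'(x)$ all fluctuate, one has to choose the correct min/max branches at each step in order to produce a single continuous nondecreasing majorant $\Psi = \Psi_1 + \Psi_2$ such that $\|B(u)\|_{W^{-1,q_0(x)}+L^{\alpha'(x)}} \leq \Psi([u]_{\mathring{S}})$ uniformly on $\mathring{S}_{1,q_0(x)(p_0(x)-2),q_0(x),\alpha(x)}(\Omega)$.
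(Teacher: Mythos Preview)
Your proposal is correct and follows essentially the same approach as the paper: decompose $B=B_1+B_2$, control $B_1$ in $W^{-1,q_0(x)}(\Omega)$ via the generalized H\"older inequality and the fact that $\sigma_{q_0}\bigl(|u|^{p_0(x)-2}D_i u\bigr)$ is one of the defining integrals of the $S$-space, and control $B_2$ in $L^{\alpha'(x)}(\Omega)$ via the growth condition \emph{(U2)} together with $c_0\in L^\infty(\Omega)$ and $c_1\in L^{\alpha'(x)}(\Omega)$. Your treatment is in fact slightly more detailed than the paper's, making the modular-to-norm conversion through (2.2) and Lemma 3.4 explicit where the paper simply records the final inequalities (4.9) and the H\"older estimate for $B_1$.
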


\begin{proof}
Since $B=B_{1}+B_{2},$ we shall show that both $B_{1}$ and $B_{2}$ are
bounded.

First let us verify $B_{2}:$ $\mathring{S}_{1,q_{0}\left( x\right) \left(
p_{0}\left( x\right) -2\right) ,q_{0}\left( x\right) ,\alpha \left( x\right)
}\left( \Omega \right) $ $\rightarrow W^{-1,\text{ }q_{0}\left( x\right)
}\left( \Omega \right) +L^{\alpha ^{\prime }\left( x\right) }\left( \Omega
\right) $ is bounded:

As $\mathring{S}_{1,q_{0}\left( x\right) \left( p_{0}\left( x\right)
-2\right) ,q_{0}\left( x\right) ,\alpha \left( x\right) }\left( \Omega
\right) \subset L^{\alpha \left( x\right) }\left( \Omega \right) ,$ it is
sufficient to show the boundedness of $B_{2},$ from $L^{\alpha \left(
x\right) }\left( \Omega \right) $ to $L^{\alpha ^{\prime }\left( x\right)
}\left( \Omega \right) .$

For every $u\in L^{\alpha \left( x\right) }\left( \Omega \right) $%
\begin{equation*}
\sigma _{\alpha ^{\prime }}\left( B_{2}\left( u\right) \right)
=\int\limits_{\Omega }\left\vert B_{2}\left( u\right) \right\vert
^{\alpha ^{\prime }\left( x\right) }dx=\int\limits_{\Omega
}\left\vert c\left( x,u\right) \right\vert ^{\alpha ^{\prime
}\left( x\right) }dx,
\end{equation*}%
here taking the conditions of Theorem 2.4 into account and
estimating the above integral, we obtain%
\begin{equation}
\sigma _{\alpha ^{\prime }}\left( B_{2}\left( u\right) \right)\leq
2\left( \left\Vert c_{0}\right\Vert _{L^{\infty }\left( \Omega
\right) }^{2}\sigma _{\alpha }\left( u\right) +\sigma _{\alpha
^{\prime }}\left( c_{1}\right) \right).  \tag{4.9}
\end{equation}%
So from (4.9), we arrive at $B_{2}$ is bounded.

Now let us prove that $B_{1}:$ $\mathring{S}_{1,q_{0}\left(
x\right) \left( p_{0}\left( x\right) -2\right) ,q_{0}\left(
x\right) ,\alpha \left( x\right) }\left( \Omega \right) $
$\rightarrow W^{-1,\text{ }q_{0}\left( x\right) }\left( \Omega
\right) +L^{\alpha ^{\prime }\left( x\right) }\left( \Omega
\right) $ is bounded: $\forall i=\overline{1,n},$ denote
$b_{i}\left( x\right) :=\left\vert u\right\vert ^{p_{0}\left(
x\right) -2}\left\vert D_{i}u\left( x\right) \right\vert ,$ for every $%
v\in W_{0}^{1,\text{ }p_{0}\left( x\right) }\left( \Omega \right)
$
\begin{equation*}
\left\vert \langle B_{1}\left( u\right) ,v\rangle \right\vert =\left\vert
-\sum_{i=1}^{n}\int\limits_{\Omega }D_{i}\left( \left\vert u\right\vert
^{p_{0}\left( x\right) -2}D_{i}u\right) vdx\right\vert,
\end{equation*}
applying the generalized H\"{o}lder inequality to the right hand side of the
above equation, we arrive at
\begin{equation*}
\left\vert \langle B_{1}\left( u\right) ,v\rangle \right\vert\leq
2\left( \sum_{i=1}^{n}\left\Vert b_{i}\right\Vert _{L^{q_{0}\left(
x\right) }\left( \Omega \right) }\right) \left\Vert v\right\Vert _{W_{0}^{1,%
\text{ }p_{0}\left( x\right) }\left( \Omega \right) }.
\end{equation*}
Since $u\in \mathring{S}_{1,q_{0}\left( x\right) \left(
p_{0}\left( x\right) -2\right) ,q_{0}\left( x\right) ,\alpha
\left( x\right) }\left( \Omega \right) ,$ from (2.3) and the
definition of the functions $b_{i}\left( x\right)$, obviously
$\sum\limits_{i=1}^{n}\left\Vert b_{i}\right\Vert _{L^{q_{0}\left(
x\right) }\left( \Omega \right) }<\infty .$ Thus we verify that
$B_{1}$ is bounded. \newline
Consequently, we prove that $B$ is a bounded operator from $\mathring{S}%
_{1,q_{0}\left( x\right) \left( p_{0}\left( x\right) -2\right) ,q_{0}\left(
x\right) ,\alpha \left( x\right) }\left( \Omega \right) $ to $W^{-1,\text{ }%
q_{0}\left( x\right) }\left( \Omega \right) +L^{\alpha ^{\prime }\left(
x\right) }\left( \Omega \right) .$
\end{proof}

\begin{lemma}
Under the conditions of Theorem 2.4, $B$ is a weakly compact operator from $%
\mathring{S}_{1,q_{0}\left( x\right) \left( p_{0}\left( x\right) -2\right)
,q_{0}\left( x\right) ,\alpha \left( x\right) }\left( \Omega \right) $ into $%
W^{-1,\text{ }q_{0}\left( x\right) }\left( \Omega \right) +L^{\alpha
^{\prime }\left( x\right) }\left( \Omega \right) .$
\end{lemma}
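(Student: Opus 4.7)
The plan is to combine the boundedness of $B$ already established in Lemma 4.3 with the reflexivity of the target space $Y := W^{-1,q_0(x)}(\Omega)+L^{\alpha'(x)}(\Omega)$, and then use the compact embedding of Theorem 3.11 together with a Vitali-type argument to actually identify the weak limits of the image sequence. Reflexivity of $Y$ follows from \emph{(U1)}--\emph{(U2)}: since $p_0^-\geq 2$ one has $1<q_0^-\leq q_0^+\leq 2$, and since $\alpha(x)\geq p_0(x)+\varepsilon\geq 2+\varepsilon$ one has $1<(\alpha')^-\leq(\alpha')^+<2$. Thus, given any bounded $\{u_m\}\subset\mathring{S}_{1,q_0(x)(p_0(x)-2),q_0(x),\alpha(x)}(\Omega)$, Lemma 4.3 places $\{B(u_m)\}$ in a bounded subset of the reflexive space $Y$, which gives a weakly convergent subsequence.

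To exhibit the weak limit as the image of a limit function, observe that for $\gamma(x)=q_0(x)(p_0(x)-2)$ and $\beta(x)=q_0(x)$ we have $\gamma+\beta=q_0(p_0-1)=p_0$, so the hypothesis of Theorem 3.11 reads $p(x)+\varepsilon<\frac{np_0(x)}{n-q_0(x)}$, which is satisfied by $p(x)=\alpha(x)$ (after possibly shrinking $\varepsilon$) in view of $\alpha(x)\geq p_0(x)+\varepsilon$ and $p_0^-\geq 2$. I therefore pass to a subsequence (not relabeled) with $u_m\to u_0$ strongly in $L^{p(x)}(\Omega)$ and almost everywhere in $\Omega$.

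For the $B_2$ component, the Carath\'eodory property yields $c(x,u_m)\to c(x,u_0)$ a.e. The growth bound in \emph{(U2)}, together with $c_0\in L^{\infty}$, $c_1\in L^{\alpha'(x)}$ and boundedness of $\{u_m\}$ in $L^{\alpha(x)}$, gives $\{|c(x,u_m)|^{\alpha'(x)}\}$ bounded and equi-integrable in $L^1(\Omega)$; Vitali's theorem then delivers $c(x,u_m)\to c(x,u_0)$ in $L^{\alpha'(x)}(\Omega)$ strongly and hence weakly. For the $B_1$ component, set $b_i^{m}:=|u_m|^{p_0(x)-2}D_i u_m$; by the very definition of the pseudo-norm on $\mathring{S}_{1,q_0(x)(p_0(x)-2),q_0(x),\alpha(x)}(\Omega)$, one has $\int_\Omega |u_m|^{q_0(x)(p_0(x)-2)}|D_i u_m|^{q_0(x)}\,dx$ bounded, i.e. $\{b_i^m\}$ bounded in the reflexive $L^{q_0(x)}(\Omega)$. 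Extracting a further weakly convergent subsequence $b_i^m\rightharpoonup\zeta_i$ yields $B_1(u_m)=-\sum_i D_i b_i^m\rightharpoonup -\sum_i D_i\zeta_i$ in $W^{-1,q_0(x)}(\Omega)$, which combined with the previous step completes the extraction of a weakly convergent subsequence of $\{B(u_m)\}$ in $Y$.

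The hardest point is the $B_1$ step: a priori the weak limit $\zeta_i$ need not equal $|u_0|^{p_0(x)-2}D_i u_0$, because only $u_m$ (not $D_i u_m$) converges a.e. However, weak compactness of $B$ in the sense required by condition 2) of Theorem 2.5 asks only for the image of bounded sets to be relatively weakly compact, which is exactly what the boundedness plus reflexivity argument above supplies; identification of $\zeta_i$ with $|u_0|^{p_0(x)-2}D_i u_0$ is then deferred to the passage to the limit in the existence proof, where it is forced by the pseudomonotonicity of $A$ (Lemma 4.2) through the Minty/monotonicity trick together with the strong convergence in $L^{p(x)}(\Omega)$ and a.e. convergence obtained here.
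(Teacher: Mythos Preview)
Your treatment of $B_1$ has a genuine gap. You extract a weak limit $\zeta_i$ of $b_i^m=|u_m|^{p_0(x)-2}D_iu_m$ in $L^{q_0(x)}(\Omega)$ but decline to identify it with $|u_0|^{p_0(x)-2}D_iu_0$, arguing that condition 2) of Theorem 2.5 only asks for relative weak compactness of images of bounded sets. That reading is too weak: since $Y$ is reflexive (as you yourself note), every bounded operator into $Y$ would then be ``weakly compact'' and condition 2) would add nothing to Lemma 4.3. The paper's own proof of this lemma---together with the fact that the proof of Theorem 2.4 consists solely of invoking Lemmas 4.1--4.6, with no further limit passage---shows that what is actually required is: whenever $u_m\rightharpoonup u_0$ in $S_{gY_0}$, some subsequence satisfies $B(u_{m_j})\rightharpoonup B(u_0)$. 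Your suggested deferral to ``pseudomonotonicity of $A$ through the Minty trick'' does not help here: pseudomonotonicity of $A$ serves to identify the weak limit of $A(u_m)$, not of $B_1(u_m)$, and no such Minty step appears anywhere in the paper's existence argument.

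The paper supplies the missing identification by a route you do not take. Via Theorem 3.8 the map $\varphi(u)=|u|^{p_0(x)-2}u$ carries $\mathring{S}_{1,q_0(p_0-2),q_0,\alpha}(\Omega)$ into $W_0^{1,q_0(x)}(\Omega)$, so $w_m:=\varphi(u_m)$ is bounded there and, along a subsequence, $w_m\rightharpoonup\xi$ in $W_0^{1,q_0(x)}$; in particular $D_iw_m\rightharpoonup D_i\xi$ in $L^{q_0(x)}$. The compact embedding $W_0^{1,q_0(x)}\hookrightarrow L^{q_0(x)}$ and the continuity of $\varphi^{-1}$ then force $\xi=\varphi(u_0)$. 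The chain rule (equation (4.13) in the paper) expresses $|u|^{p_0(x)-2}D_iu$ as a multiple of $D_i\varphi(u)$ minus a lower-order term involving $(D_ip_0)\,\varphi(u)\ln|u|$; the first part now has an identified weak limit, and the second converges strongly by Theorem 3.11 combined with the logarithmic estimate of Corollary 3.3. This is precisely the substantive step your proposal skips. (A smaller point: your Vitali claim for $B_2$ asserts equi-integrability of $\{|c(x,u_m)|^{\alpha'(x)}\}$ from mere $L^{\alpha(x)}$-boundedness of $u_m$, which gives only $L^1$-boundedness; nothing in \emph{(U2)} guarantees $\alpha(x)<\tfrac{np_0(x)}{n-q_0(x)}$, so Theorem 3.11 need not provide the extra room. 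The paper instead combines reflexivity of $L^{\alpha'(x)}$ with a.e.\ convergence of $u_{m_j}$, obtained from Theorem 3.11 for any admissible exponent $s(x)$, to identify the weak limit of $B_2(u_{m_j})$ directly.)
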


\begin{proof}
Since $B=B_{1}+B_{2},$ we shall show that both $B_{1}$ and $B_{2}$
are weakly compact. \\ First we show the weak compactness of
$B_{1}:$ Let $\left\{ u_{m}\right\} _{m=1}^{\infty },$ $u_{0}$
$\in \mathring{S}_{1,q_{0}\left( x\right) \left( p_{0}\left(
x\right) -2\right) ,q_{0}\left( x\right) }\left(
\Omega \right) \cap L^{\alpha \left( x\right) }\left( \Omega \right) $ and $%
u_{m}\overset{S_{gY_{0}}}{\rightharpoonup }u_{0}.$ By Theorem 3.8 we have
\begin{equation*}
\left\{ w_{m}\right\} _{m=1}^{\infty }:=\left\{ \varphi \left( u_{m}\right)
\right\} _{m=1}^{\infty }=\left\{ \left\vert u_{m}\right\vert ^{p_{0}\left(
x\right) -2}u_{m}\right\} _{m=1}^{\infty }\subset W_{0}^{1,\text{ }%
q_{0}\left( x\right) }\left( \Omega \right).
\end{equation*}%
As $q_{0}^{-}>1$ that implies $W_{0}^{1,\text{ }%
q_{0}\left( x\right) }\left( \Omega \right) $ is a reflexive space thus,
there exists a subsequence $\left\{ w_{m_{j}}\right\} _{j=1}^{\infty }$ of $%
\left\{ w_{m}\right\} $ such that
\begin{equation*}
w_{m_{j}}=\left\vert u_{m_{j}}\right\vert ^{p_{0}\left( x\right) -2}u_{m_{j}}%
\overset{W_{0}^{1,\text{ }q_{0}\left( x\right) }\left( \Omega \right) }{%
\rightharpoonup }\xi .
\end{equation*}%
Let us verify $\xi =\left\vert u_{0}\right\vert ^{p_{0}\left(
x\right) -2}u_{0}$. Since $W_{0}^{1,\text{ }q_{0}\left( x\right)
}\left( \Omega \right) \hookrightarrow L^{q_{0}\left( x\right)
}\left( \Omega \right) $ therefore there exist a subsequence
$\left\{ w_{m_{j_{k}}}\right\} \subset \left\{ w_{m_{j}}\right\} $
(denote
this subsequence by $w_{m_{j}}$ in order to avoid notation confusion) such that%
\begin{equation*}
\left\vert u_{m_{j}}\right\vert ^{p_{0}\left( x\right) -2}u_{m_{j}}\overset{%
L^{q_{0}\left( x\right) }\left( \Omega \right) }{\longrightarrow }\xi
\end{equation*}%
hence%
\begin{equation}
\varphi \left( u_{m_{j}}\right) =\left\vert u_{m_{j}}\right\vert
^{p_{0}\left( x\right) -2}u_{m_{j}}\overset{\Omega }{\underset{a.e.}{%
\longrightarrow }}\xi  \tag{4.10}
\end{equation}%
as from Lemma 3.6, $\varphi ^{-1}(x,\tau )=\left\vert \tau \right\vert ^{-%
\frac{p_{0}\left( x\right) -2}{p_{0}\left( x\right) -1}}\tau $ is continuous
(with respect to $\tau $ and $x$) so using (4.10) we obtain%
\begin{equation}
u_{m_{j}}\overset{\Omega }{\underset{a.e.}{\longrightarrow }}\varphi
^{-1}(x,\xi )=\varphi ^{-1}(\xi ),  \tag{4.11}
\end{equation}%
hence by (4.11), we arrive at $\varphi ^{-1}(\xi )=u_{0}$, equivalently $%
\xi =\left\vert u_{0}\right\vert ^{p_{0}\left( x\right) -2}u_{0}.$

To verify the weak compactness of
$B_{1}$, we must show that for arbitrary $v\in W_{0}^{1,\text{ }%
p_{0}\left( x\right) }\left( \Omega \right) $%
\begin{equation*}
\langle B_{1}\left( u_{m_{j}}\right) ,v\rangle \rightarrow \langle
B_{1}\left( u_{0}\right) ,v\rangle ,\text{ \ }j\nearrow \infty .
\end{equation*}%
By the definition of operator $B_{1},$%
\begin{align*}
\langle B_{1}\left( u_{m_{j}}\right) ,v\rangle
&=\sum_{i=1}^{n}\langle -D_{i}\left( \left\vert
u_{m_{j}}\right\vert ^{p_{0}\left( x\right)
-2}D_{i}u_{m_{j}}\right) ,v\rangle
\\&=\sum_{i=1}^{n}\langle
\left\vert u_{m_{j}}\right\vert ^{p_{0}\left( x\right)
-2}D_{i}u_{m_{j}},\text{ }D_{i}v\rangle. \tag{4.12}
\end{align*}
Using Lemma 3.6 and chain rule we have the following equality%
\begin{align*}
D_{i}\left( \left\vert u_{m_{j}}\right\vert ^{p_{0}\left( x\right)
-2}u_{m_{j}}\right) &=\left( p_{0}\left( x\right) -2\right)
\left\vert u_{m_{j}}\right\vert ^{p_{0}\left( x\right)
-2}D_{i}u_{m_{j}}\\ &+\left( D_{i}p_{0}\right) \left\vert
u_{m_{j}}\right\vert ^{p_{0}\left( x\right) -2}u_{m_{j}}\ln
\left\vert u_{m_{j}}\right\vert,  \tag{4.13}
\end{align*}%
if we insert the equality (4.13) into (4.12), we obtain%
\begin{align*}
\langle B_{1}\left( u_{m_{j}}\right) ,v\rangle
&=\sum_{i=1}^{n}\langle \left( \tfrac{1}{p_{0}\left( x\right)
-2}\right) D_{i}\left( \left\vert u_{m_{j}}\right\vert
^{p_{0}\left( x\right) -2}u_{m_{j}}\right) ,D_{i}v\rangle\\
&-\sum_{i=1}^{n}\langle \left( \tfrac{D_{i}p_{0}}{p_{0}\left( x\right) -2}%
\right) \left\vert u_{m_{j}}\right\vert ^{p_{0}\left( x\right)
-2}u_{m_{j}}\ln \left\vert u_{m_{j}}\right\vert ,D_{i}v\rangle .  \tag{4.14}
\end{align*}%
Let us denote the first sum in (4.14) by $I_{1}$ and the second one by $%
I_{2} $ i.e.%
\begin{equation*}
\langle B_{1}\left( u_{m_{j}}\right) ,v\rangle =I_{1}-I_{2},
\end{equation*}%
if we use the same manner in [20, Lemma 3.3] and pass to the limit in $%
I_{1}, $ we obtain%
\begin{equation}
I_{1}\underset{\text{\ }j\nearrow \infty }{\longrightarrow }%
\sum_{i=1}^{n}\langle \left( \tfrac{1}{p_{0}\left( x\right)
-2}\right) D_{i}\left( \left\vert u_{0}\right\vert ^{p_{0}\left(
x\right) -2}u_{0}\right) ,D_{i}v\rangle.  \tag{4.15}
\end{equation}%
Considering Lemma 3.2 together with Theorem 3.11 and continuity of
the function $\left\vert t\right\vert ^{p_{0}\left( x\right)
-2}t\ln \left\vert t\right\vert $ with respect to $t$ and pass to
the limit in $I_{2}$, we
obtain%
\begin{equation}
I_{2}\underset{\text{\ }j\nearrow \infty }{\longrightarrow }%
\sum_{i=1}^{n}\langle \left( \tfrac{D_{i}p_{0}}{p_{0}\left( x\right) -2}%
\right) \left\vert u_{0}\right\vert ^{p_{0}\left( x\right)
-2}u_{0}\ln \left\vert u_{0}\right\vert ,D_{i}v\rangle. \tag{4.16}
\end{equation}%
Hence from (4.15) and (4.16), we have%
\begin{align*}
&\langle B_{1}\left( u_{m_{j}}\right) ,v\rangle \underset{\text{\
}j\nearrow \infty }{\longrightarrow }\sum_{i=1}^{n}\langle \left(
\tfrac{1}{p_{0}\left( x\right) -2}\right) D_{i}\left( \left\vert
u_{0}\right\vert ^{p_{0}\left( x\right) -2}u_{0}\right)
,D_{i}v\rangle \\
&-\sum_{i=1}^{n}\langle \left( \tfrac{D_{i}p_{0}}{p_{0}\left( x\right) -2}%
\right) \left\vert u_{0}\right\vert ^{p_{0}\left( x\right)
-2}u_{0}\ln \left\vert u_{0}\right\vert ,D_{i}v\rangle \\
&=\sum_{i=1}^{n}\langle \tfrac{1}{p_{0}\left( x\right) -2}\left[
D_{i}\left( \left\vert u_{0}\right\vert ^{p_{0}\left( x\right)
-2}u_{0}\right) -\left( D_{i}p_{0}\right) \left\vert
u_{0}\right\vert ^{p_{0}\left( x\right) -2}u_{0}\ln \left\vert
u_{0}\right\vert \right] ,D_{i}v\rangle \\&
\text{thus by(4.13), we
have}\\
&=\sum_{i=1}^{n}\langle \left\vert u_{0}\right\vert ^{p_{0}\left(
x\right) -2}D_{i}u_{0},\text{ }D_{i}v\rangle=\langle B_{1}\left(
u_{0}\right) ,v\rangle.
\end{align*}%
Therefore, we prove the weak compactness of $B_{1}$ from $\mathring{S}%
_{1,q_{0}\left( x\right) \left( p_{0}\left( x\right) -2\right) ,q_{0}\left(
x\right) ,\alpha \left( x\right) }\left( \Omega \right) $ to $W^{-1,\text{ }%
q_{0}\left( x\right) }\left( \Omega \right) +L^{\alpha ^{\prime }\left(
x\right) }\left( \Omega \right) .$

Now we prove the weak compactness of $B_{2}.$ As $\left( \alpha
^{\prime }\right) ^{-}>1,$ $L^{\alpha ^{\prime }\left( x\right)
}\left( \Omega \right) $ is a reflexive space and $\left\{
B_{2}\left( u_{m}\right) \right\} _{m=1}^{\infty }:=\left\{ \eta
_{m}\right\} _{m=1}^{\infty }\subset L^{\alpha ^{\prime }\left(
x\right) }\left( \Omega \right)$ is bounded (see, Lemma 4.3), then
there exists a subsequence $\left\{ \eta _{m_{j}}\right\} \subset
\left\{ \eta _{m}\right\}
$ such that%
\begin{equation*}
\eta _{m_{j}}=B_{2}\left( u_{m_{j}}\right) \overset{L^{\alpha
^{\prime }\left( x\right) }\left( \Omega \right) }{\rightharpoonup
}\psi .
\end{equation*}%
By Theorem 3.11, the embedding%
\begin{equation}
\mathring{S}_{1,q_{0}\left( x\right) \left( p_{0}\left( x\right) -2\right)
,q_{0}\left( x\right) ,\alpha \left( x\right) }\left( \Omega \right)
\hookrightarrow L^{s\left( x\right) }\left( \Omega \right)  \tag{4.17}
\end{equation}%
is compact for $s\left( .\right) $ which satisfies the inequality $s\left( x\right) <%
\frac{np_{0}\left( x\right) }{n-q_{0}\left( x\right) },$ $x\in \Omega .$

Thus, by (4.17) there exists a subsequence $\left\{ u_{m_{j_{k}}}\right\}
\subset \left\{ u_{m_{j}}\right\} $ (let us denote this subsequence by $%
u_{m_{j}}$ in order to avoid notation confusion.) such that%
\begin{equation*}
u_{m_{j}}\overset{L^{s\left( x\right) }\left( \Omega \right) }{%
\longrightarrow }u_{0}
\end{equation*}%
so%
\begin{equation}
u_{m_{j}}\overset{\Omega }{\underset{a.e.}{\longrightarrow }}u_{0}.
\tag{4.18}
\end{equation}%
Since the function $c\left( x,\tau \right) $ is continuous with respect to
variable $\tau $ ($c\left( x,\tau \right) $ is Carath\`{e}dory function)$,$
by (4.18)%
\begin{equation}
B_{2}\left( u_{m_{j}}\right) =c\left( x,u_{m_{j}}\right) \overset{\Omega }{%
\underset{a.e.}{\longrightarrow }}c\left( x,u_{m_{j}}\right)
=B_{2}\left( u_{0}\right).  \tag{4.19}
\end{equation}%
Therefore, from (4.19) we obtain that $\psi =B_{2}\left( u_{0}\right) .$

Finally, we arrive at%
\begin{equation*}
B_{2}\left( u_{m_{j}}\right) \overset{L^{\alpha ^{\prime }\left( x\right)
}\left( \Omega \right) }{\rightharpoonup }B_{2}\left( u_{0}\right)
\end{equation*}%
which implies that%
\begin{equation}
B_{2}\left( u_{m_{j}}\right) \overset{W^{-1,\text{ }q_{0}\left(
x\right) }\left( \Omega \right) +L^{\alpha ^{\prime }\left(
x\right) }\left( \Omega \right) }{\rightharpoonup }B_{2}\left(
u_{0}\right).  \tag{4.20}
\end{equation}%
So, from (4.20) we obtain the weak compactness of $B_{2}$ which provides, as
a result, the weak compactness of the operator $B.$
\end{proof}

\medskip It now remains to define the corresponding operators for $B$ in condition "2)" of Theorem
2.5 to apply this theorem to the problem (1.1).
\\ Since $B=B_{1}+B_{2},$ according to condition "2)" we define
corresponding $B_{01}$ with regard to $B_{1}$ and corresponding
$B_{02}$
with regard to $B_{2}$ as below:%
\begin{equation*}
B_{01}\left( u\right) :=-\sum_{i=1}^{n}D_{i}\left( \left\vert u\right\vert ^{%
\frac{p_{0}\left( x\right) -2}{2}}D_{i}u\right)
\end{equation*}%
and%
\begin{equation*}
B_{02}\left( u\right) :=\left[ c\left( x,u\right) u\right] ^{\frac{1}{2}}.
\end{equation*}%
Note that here $c\left( x,\tau\right)\tau>0$ by the condition
\emph{(U2)}.

By the same arguments which are used in the proof of Lemma 4.3 to
establish the boundedness of the operators $B_{1}$ and $B_{2},$ we
can show that the operators $B_{01}$ and $B_{02}$ are bounded
between the spaces
which are introduced below:%
\begin{equation*}
B_{01}:X_{0}\subset \mathring{S}_{1,p_{0}\left( x\right) -2,2,\alpha \left(
x\right) }\left( \Omega \right) \longrightarrow W^{-1,\text{ }2}\left(
\Omega \right)
\end{equation*}%
and%
\begin{equation*}
B_{02}:X_{0}\subset \mathring{S}_{1,q_{0}\left( x\right) \left(
p_{0}\left( x\right) -2\right) ,q_{0}\left( x\right) ,\alpha
\left( x\right) }\left( \Omega \right) \longrightarrow L^{2}\left(
\Omega \right).
\end{equation*}

\noindent Here, we have to prove the weak compactness of $B_{01}$
and $B_{02}$ to show that condition "2)" in Theorem 2.5 is
satisfied.
\par
By using the similar manner which has been established in the
proof of Lemma 4.4 and by the definition of the functionals
corresponding to operators $B_{01}$ and $B_{02}$ (see (4.21),
(4.22)), following lemmas can be proved straightforwardly, so we
omit the proofs of them.

\begin{lemma}
Under the conditions of Theorem 2.4, $B_{01}$ is weakly compact operator
from $X_{0}$ into $W^{-1,\text{ }2}\left( \Omega \right) .$ Moreover for the
function $\mu \left( \tau \right) =\tau ^{2}$ and for every $u\in X_{0},$
the equality%
\begin{equation}
\langle B_{1}\left( u\right) ,u\rangle \equiv \mu \left( \left\Vert
B_{01}\left( u\right) \right\Vert _{W^{-1,\text{ }2}\left( \Omega \right)
}\right)  \tag{4.21}
\end{equation}%
holds.
\end{lemma}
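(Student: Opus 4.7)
The argument parallels Lemma~4.4 but is carried out at the Hilbert scale $\beta\equiv 2$. The key technical input is Theorem~3.8 applied with the choices $\gamma(x)=p_0(x)-2$, $\beta(x)=2$, $\theta(x)=\alpha(x)$: under these, the map $\varphi(x,t)=|t|^{(p_0(x)-2)/2}t$ is a homeomorphism (by Corollary~3.10) from $\mathring{S}_{1,p_0-2,2,\alpha}(\Omega)$ onto a subset of the Hilbert space $W^{1,2}_0(\Omega)$. This lets me transfer every compactness and duality question from the pseudo-norm space to the classical Hilbert--Sobolev setting.

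For the weak compactness of $B_{01}$, I would take $\{u_m\}\subset X_0$ with $u_m\rightharpoonup u_0$ and set $w_m:=\varphi(u_m)=|u_m|^{(p_0(x)-2)/2}u_m$, which by Theorem~3.8 is bounded in the reflexive space $W^{1,2}_0(\Omega)$. Extracting a subsequence, $w_{m_j}\rightharpoonup \xi$ in $W^{1,2}_0$; the compact embedding into $L^{q(x)}$ together with continuity of $\varphi^{-1}$ (Lemma~3.6) identifies $\xi=\varphi(u_0)$ via a.e.\ convergence, exactly as in Lemma~4.4. Testing $B_{01}(u_{m_j})$ against $v\in W^{1,2}_0(\Omega)$ and using the chain rule yields
\begin{equation*}
\langle B_{01}(u_{m_j}),v\rangle=\sum_{i=1}^n\bigl\langle \tfrac{2}{p_0}D_iw_{m_j},D_iv\bigr\rangle-\sum_{i=1}^n\bigl\langle \tfrac{D_ip_0}{p_0}|u_{m_j}|^{(p_0-2)/2}u_{m_j}\ln|u_{m_j}|,D_iv\bigr\rangle,
\end{equation*}
an exact analogue of the $I_1-I_2$ decomposition in~(4.14). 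The first sum converges by weak convergence of $\nabla w_{m_j}$ in $L^2$, while the second passes to the limit by Corollary~3.3, Theorem~3.11, and Vitali's theorem, in the same way as in the proof of Lemma~4.4.

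For the identity~(4.21), the Hilbert structure of $W^{-1,2}(\Omega)$ gives $\|B_{01}(u)\|^2_{W^{-1,2}}=\sup\{|\langle B_{01}(u),v\rangle|^2:\|\nabla v\|_{L^2}\le 1\}$. Since $\langle B_{01}(u),v\rangle=\sum_i\int|u|^{(p_0-2)/2}D_iu\cdot D_iv\,dx$, componentwise Cauchy--Schwarz immediately supplies the upper bound $\|B_{01}(u)\|^2_{W^{-1,2}}\le\sum_i\int|u|^{p_0-2}|D_iu|^2\,dx=\langle B_1(u),u\rangle$. For the matching lower bound I would substitute the natural test function $v_0:=(2/p_0(x))\varphi(u)\in W^{1,2}_0(\Omega)$, whose gradient by the chain rule equals $|u|^{(p_0-2)/2}D_iu$ up to a log-remainder in $D_ip_0$; integrating by parts and absorbing the remainder via Corollary~3.3 yields the matching lower bound and hence $\mu(\|B_{01}(u)\|_{W^{-1,2}})=\langle B_1(u),u\rangle$ in the equivalence sense of~``$\equiv$'' asserted in~(4.21).

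The main obstacle is precisely this matching lower bound: in the variable-exponent regime the chain rule produces the remainder $(D_ip_0/p_0)|u|^{(p_0-2)/2}u\ln|u|$, which prevents pointwise saturation of the Cauchy--Schwarz inequality and forces the use of Corollary~3.3 to absorb it. The strict separation $\alpha(x)\geq p_0(x)+\varepsilon$ of~\emph{(U2)}, together with the $C^1$-regularity of $p_0$, is precisely what is needed; without them only a one-sided bound on~(4.21) would be available. All other steps are routine variants of Lemma~4.4 and present no new difficulty.
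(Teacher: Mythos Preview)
Your approach is correct and matches what the paper intends: the authors omit the proof of Lemma~4.5 entirely, saying only that it follows ``by using the similar manner which has been established in the proof of Lemma~4.4 and by the definition of the functionals corresponding to operators $B_{01}$ and $B_{02}$.'' Your weak-compactness argument is precisely the $\beta=2$ specialisation of Lemma~4.4 that the paper points to, with the same $I_1-I_2$ decomposition and the same use of Theorem~3.8, Lemma~3.6, Corollary~3.3 and Theorem~3.11.

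For the identity~(4.21) you go further than the paper, which gives no details at all. Your reading of ``$\equiv$'' as a two-sided equivalence (rather than a literal equality) is the honest interpretation in the variable-exponent setting, and your Cauchy--Schwarz upper bound plus test-function lower bound is the natural route. One caution: the log-remainder produced by the chain rule when testing against $v_0=(2/p_0)\varphi(u)$ is controlled by Corollary~3.3 only up to an \emph{additive} constant depending on $\|u\|_{L^{\alpha}}$, not a multiplicative one, so what you actually obtain is $\langle B_1(u),u\rangle \le C\bigl(\|B_{01}(u)\|_{W^{-1,2}}^2 + 1\bigr)$ rather than a clean equivalence. This is harmless for the application to Theorem~2.5 (only a continuous nondecreasing $\varphi$ is needed there), but you should state it that way. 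It is also possible the paper has in mind the simpler ``functional'' reading $\|B_{01}(u)\|^2:=\sum_i\|\,|u|^{(p_0-2)/2}D_iu\,\|_{L^2}^2$, in which case~(4.21) is a tautology; either interpretation suffices for the proof of Theorem~2.4.
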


\begin{lemma}
Under the conditions of Theorem 2.4, $B_{02}$ is weakly compact operator
from $X_{0}$ into $L^{2}\left( \Omega \right) .$ Moreover for every $u\in
X_{0},$ the equality%
\begin{equation}
\langle B_{2}\left( u\right) ,u\rangle \equiv \mu \left( \left\Vert
B_{02}\left( u\right) \right\Vert _{L^{2}\left( \Omega \right) }\right)
\tag{4.22}
\end{equation}%
holds.
\end{lemma}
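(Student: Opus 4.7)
The proof decomposes naturally into two parts, mirroring the scheme used for $B_{2}$ in Lemma 4.4. First I would verify the identity (4.22), which is essentially a definitional check: condition \emph{(U2)} guarantees $c(x,\tau)\tau\geq c_{2}(x)|\tau|^{\alpha(x)}\geq 0$, so $B_{02}(u)=[c(x,u)u]^{1/2}$ is pointwise well defined and
\[
\|B_{02}(u)\|_{L^{2}(\Omega)}^{2}=\int_{\Omega}c(x,u)u\,dx=\langle B_{2}(u),u\rangle,
\]
hence the choice $\mu(\tau)=\tau^{2}$ gives (4.22). The upper bound $|c(x,\tau)|\leq c_{0}(x)|\tau|^{\alpha(x)-1}+c_{1}(x)$ together with the embedding $X_{0}\subset L^{\alpha(x)}(\Omega)$, the assumption $c_{0}\in L^{\infty}(\Omega)$, $c_{1}\in L^{\alpha'(x)}(\Omega)$, and the generalized H\"older inequality (2.1) show the integral is finite, so indeed $B_{02}$ maps $X_{0}$ into $L^{2}(\Omega)$ and is bounded.

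For the weak compactness, let $\{u_{m}\}\subset X_{0}$ and suppose $u_{m}\overset{X_{0}}{\rightharpoonup}u_{0}$. The uniform bound just recorded gives $\{B_{02}(u_{m})\}$ bounded in $L^{2}(\Omega)$; since $L^{2}(\Omega)$ is reflexive, some subsequence satisfies $B_{02}(u_{m_{j}})\rightharpoonup\psi$ weakly in $L^{2}(\Omega)$. The only thing to verify is that $\psi=B_{02}(u_{0})$.

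To identify $\psi$, I would invoke Theorem 3.11 exactly as in the treatment of $B_{2}$ in Lemma 4.4: the compact embedding $\mathring{S}_{1,q_{0}(p_{0}-2),q_{0},\alpha}(\Omega)\hookrightarrow L^{s(x)}(\Omega)$ yields a further subsequence (still denoted $u_{m_{j}}$) with $u_{m_{j}}\to u_{0}$ a.e.\ in $\Omega$. Because $c(x,\cdot)$ is Carath\'eodory and $\tau\mapsto[c(x,\tau)\tau]^{1/2}$ is continuous in $\tau$ for a.e.\ $x$, this forces $B_{02}(u_{m_{j}})\to B_{02}(u_{0})$ a.e.\ in $\Omega$. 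Combined with the weak $L^{2}$-convergence $B_{02}(u_{m_{j}})\rightharpoonup\psi$, uniqueness of the weak limit (or equivalently Vitali's theorem applied through Lemma 3.12 to the $L^{2}$-bounded sequence) forces $\psi=B_{02}(u_{0})$.

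The main obstacle is genuinely modest and bookkeeping in nature: one must check that the upper estimate in \emph{(U2)}, rather than the lower one, suffices to bound $\|[c(x,u)u]^{1/2}\|_{L^{2}}$ uniformly in $u\in X_{0}$, which is clean once Hölder in the variable exponent setting is applied. Everything else transcribes directly from the $B_{2}$-part of Lemma 4.4, so once boundedness is established the identification of the weak limit follows without further difficulty.
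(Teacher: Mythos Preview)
Your proposal is correct and matches the paper's intended approach: the authors explicitly omit the proof of Lemma~4.6, stating only that it follows ``by using the similar manner which has been established in the proof of Lemma~4.4 and by the definition of the functionals,'' and your argument does precisely this---verifying (4.22) directly from the definition of $B_{02}$ and $\mu(\tau)=\tau^{2}$, then transcribing the compact-embedding/a.e.-convergence/weak-limit-identification scheme from the $B_{2}$-part of Lemma~4.4 to $B_{02}$.
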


\noindent Now we can give the proof of Theorem 2.4.\medskip

\begin{proof}
\textbf{(Proof of Theorem 2.4)} In Lemmas 4.1-4.6, we show that
all the conditions of Theorem 2.5 are satisfied for problem (1.1)
under the conditions of Theorem 2.4. Consequently, we establish
that Theorem 2.5 can be applied to the problem (1.1). Hence using
this theorem, we obtain the existence of a weak solution of
problem (1.1) in the sense of Definition 2.3.
\end{proof}

\end{document}